\newtheorem{proposition}{Proposition}
\newtheorem{theorem}{Theorem}
\newtheorem{lemma}{Lemma}
\newcommand{\vanden}{N}
\newcommand{\hatvap}{\hat p}
\newcommand{\hatvapUMVU}{\hat p_\mathrm{umvu}}
\newcommand{\hatvapML}{\hat p_\mathrm{ml}}
\renewcommand{\Pr}{P}
\DeclareMathOperator {\E}{E}
\newcommand{\nnum}{r}
\newcommand{\nden}{n}
\newcommand{\ndcero}{m}
\newcommand{\ndcerocuno}{\mu_1}
\newcommand{\ndcerocdos}{\mu_2}
\newcommand{\emphmio}{\emph}
\newcommand{\ff}[2]{#1^{(#2)}} 
\newcommand{\diff}{\mathrm d}
\newcommand{\risk}{\eta}
\newcommand{\fest}{g}
\newcommand{\pmax}{p_\mathrm{max}}
\newcommand{\pinf}{p_\mathrm l}
\newcommand{\psup}{p_\mathrm u}
\newcommand{\Omegail}{\tilde\Omega}
\newcommand{\deltail}{\tilde\delta}
\newcommand{\riskej}{\lambda}
\newcommand{\riskparte}{\theta}
\newcommand{\riskparteil}{\tilde\theta}
\begin{document}

\title{Estimation of a probability in inverse binomial sampling under normalized linear-linear\\ and inverse-linear loss}
\author{Luis Mendo \thanks{E.T.S. Ingenieros de Telecomunicaci\'on, Polytechnic University of Madrid, 28040 Madrid, Spain. E-mail: lmendo@grc.ssr.upm.es.}}
\date{\today}

\maketitle

\begin{abstract}
Sequential estimation of the success probability $p$ in inverse binomial sampling is considered in this paper. For any estimator $\hatvap$, its quality is measured by the risk associated with normalized loss functions of linear-linear or inverse-linear form. These functions are possibly asymmetric, with arbitrary slope parameters $a$ and $b$ for $\hatvap < p$ and $\hatvap > p$ respectively. Interest in these functions is motivated by their significance and potential uses, which are briefly discussed. Estimators are given for which the risk has an asymptotic value as $p \rightarrow 0$, and which guarantee that, for any $p \in (0,1)$, the risk is lower than its asymptotic value. This allows selecting the required number of successes, $\nnum$, to meet a prescribed quality irrespective of the unknown $p$. In addition, the proposed estimators are shown to be approximately minimax when $a/b$ does not deviate too much from $1$, and asymptotically minimax as $\nnum \rightarrow \infty$ when $a=b$.

\emph{Keywords:} Sequential estimation, Point estimator, Inverse binomial sampling, Asymmetric loss function.
\end{abstract}

\section{Motivation and considered loss functions}
\label{parte: motiv}

The estimation of the success probability $p$ of a sequence of Bernoulli trials is a recurring problem, arising in many branches of science and engineering. The quality of a point estimator of $p$, denoted as $\hatvap$, can be measured in terms of its \emphmio{risk}, or average loss associated with a certain \emphmio{loss function} $L$. Since a given error is most meaningful when compared with the true value $p$, quality measures used in practice are most often normalized ones \citep{Mendo10_env}. This corresponds to $L$ being a function of $\hatvap/p$, rather than of $\hatvap$. Common loss functions include normalized squared error $(\hatvap/p-1)^2$ and normalized absolute error $|\hatvap/p-1|$. Interval estimation can also be analyzed in terms of a certain loss function \citep[p.~64]{Berger85} such that the resulting risk is the confidence level associated with an estimation interval.

Fixed-sample approaches to this problem suffer from the drawback that the required size depends on the unknown parameter $p$, and thus cannot be determined in advance. Therefore a \emphmio{sequential procedure} is required, consisting of a \emphmio{stopping rule}, which yields a random sample size, and an estimator based on the observed sample.

Sequential estimation in Bernoulli trials has been studied by many authors.
\citet{Girshick46} introduce and analyze a general class of sequential procedures for the unbiased estimation of $p$.
Using a similar approach, \citet{DeGroot59} gives criteria for the selection of appropriate sampling plans for unbiased estimation of functions of $p$, with estimator performance measured by variance. His work shows that the fixed-size and inverse procedures are the only efficient sampling plans, i.e.~the only ones for which the Cramer-Rao bound (or information inequality) holds with equal sign.
\citet{Hubert00} focus on the asymptotic behaviour as $p \rightarrow 0$. In this setting, they consider estimation of powers of $p$, with a loss function given as squared error loss multiplied by another power of $p$. \citet{Baran10} carry out a similar asymptotic analysis for a different loss function.
Using a Bayesian approach, \citet{Cabilio75} and \citet{Cabilio77} consider symmetrized relative squared error loss (given as $[(\hatvap-p)/(p(1-p))]^2$) plus a fixed cost per observation, and find sequential procedures which minimize the Bayes risk in the estimation of $p$.
\citet{Alvo77} studies sequential Bayes estimation from a more general point of view, where the observations are not necessarily Bernoulli variables, and considering squared error loss.

A particularly appealing stopping rule, first discussed by \citet{Haldane45}, is \emphmio{inverse binomial sampling} (also known as negative binomial sampling). Given $\nnum \in \mathbb N$, this rule consists of taking as many observations as necessary to obtain exactly $\nnum$ successes. The random number of observations, $\vanden$, is a sufficient statistic for $p$ \citep[p.~101]{Lehmann98}. The interest in this stopping rule is motivated by the useful properties of the obtained estimators. Namely, it has been shown that for an estimator $\hatvap = \fest(\vanden)$ such that $\lim_{\nden \rightarrow \infty} \nden \fest(\nden)$ exists and is positive, and for a general class of loss functions defined by certain regularity conditions, the risk has an asymptotic value as $p \rightarrow 0$ \citep{Mendo10_env}. Moreover, estimators have been found whose risk for $p$ arbitrary is guaranteed not to exceed its asymptotic value, for the specific cases of normalized mean squared error \citep{Mikulski76} \citep{Sathe77}, normalized mean absolute error \citep{Mendo09a} and confidence associated with a relative interval \citep{Mendo06} \citep{Mendo08a} \citep{Mendo10}. This allows selecting an appropriate value of $\nnum$ that meets a prescribed risk irrespective of the unknown $p$.

In all cases mentioned in the preceding paragraph, the loss incurred by a negative error equals that of the corresponding positive error. In practice, however, situation-specific factors may render underestimation more or less costly than overestimation \citep{Christoffersen97} \citep{Akdeniz04}. Consider for example $p=0.01$ and two possible values of $\hatvap$, namely $0.019$ and $0.001$. The absolute error (normalized or otherwise) is the same for both values of the estimator, as is the squared error. Nevertheless, with the first estimate $\hatvap$ is $1.9$ times $p$, whereas with the second $p$ is $10$ times $\hatvap$. In many applications it may be advisable to assign a higher loss to the second estimate. With absolute error, this could be accomplished by generalizing the loss function to one with a different slope on each side. Denoting $x = \hatvap /p$, this generalized loss is given by
\begin{equation}
\label{eq: loss abs error gen}
L(x) = \begin{cases}
a(1-x) & \text{if } x \leq 1, \\
b(x-1) & \text{if } x > 1,
\end{cases}
\end{equation}
with parameters $a, b \geq 0$, $(a,b) \neq (0,0)$. This function, known as (normalized) \emphmio{linear-linear} loss,
frequently arises in applications; see for example \citet{Granger69} and \citet{Christoffersen97}. Another proposed function (not considered in this paper) which gives different weights to positive and negative errors is the \emphmio{linear-exponential} loss, whose normalized version is $L(x) = b[\exp(a(x-1))-a(x-1)-1]$, with parameters $a \neq 0$, $b>0$ \citep{Akdeniz04}. The ratio $a/b$, in the linear-linear loss, or the parameter $a$, in the linear-exponential, control the relative importance given to underestimation and overestimation. Note that in both cases the loss due to underestimation is bounded, unlike that of overestimation, which may be arbitrarily large.

In certain situations it may be meaningful to define loss as proportional to $\hatvap/p$ or $p/\hatvap$, whichever is largest. Thus with the values in the previous example, the loss would be proportional to $1.9$ and $10$ respectively. In the following, the function $s(\hatvap,p) = \max\{\hatvap/p,p/\hatvap\}$ will be referred to as the \emphmio{symmetric ratio} of $\hatvap$ and $p$ (the name is motivated by the fact that $s(\hatvap,p) = s(p,\hatvap)$). The loss thus defined is inherently normalized, because it only depends on $\hatvap$ and $p$ through $x=\hatvap/p$. Subtracting $1$ in order to have a minimum loss equal to $0$, the loss function is expressed as $L(x) = \max\{x,1/x\} - 1$. This function is unbounded for underestimation as well as for overestimation errors. In fact, its graph is symmetric about $x=1$ if $\hatvap$, or $x$, is represented in logarithmic scale (this is obvious if $L(x)$ is written as $\exp|\log x|-1$). The risk corresponding to this loss is the mean symmetric ratio minus $1$, and represents a normalized measure of dissimilarity between $\hatvap$ and $p$, with smaller values corresponding to better estimators. A generalization is obtained, as before, by allowing different multiplicative parameters $a,b \geq 0$, $(a,b) \neq (0,0)$ on each side of the function:
\begin{equation}
\label{eq: loss upper ratio gen}
L(x) = \begin{cases}
a(1/x-1) & \text{if } x \leq 1, \\
b(x-1) & \text{if } x > 1.
\end{cases}
\end{equation}%
This will be referred to as \emphmio{inverse-linear} loss.

The loss function \eqref{eq: loss upper ratio gen}, in addition to providing a natural measure of estimation quality, namely generalized mean symmetric ratio, can be representative of incurred cost in specific applications. In spite of this, it has not been used previously in the context of estimation problems, to the author's knowledge. As an example of application, consider the production of a certain device which is subject to manufacturing defects, such as image sensors for digital cameras. Several factors in the production process (such as the presence of dust particles) may result in a sensor with specific pixels systematically showing incorrect information. Since it would be too expensive to discard all sensors that have some defect, the commonly adopted solution is as follows. Each produced sensor is tested, and if the number of defective pixels is not too large it is accepted. The location of such pixels is permanently recorded in the camera, so that they can be corrected as a part of the processing applied by the camera to generate the image.

In high-quality camera models, however, it may be desirable to use sensors with an extremely low number of defects. A possible procedure is to classify each produced sensor as ``premium'' or ``standard'', depending on whether the number of pixel defects is extremely low or merely acceptable. Premium sensors are reserved for advanced cameras, which incorporate high-quality lenses, whereas standard sensors are mounted in consumer-level cameras with average-quality lenses. For ease of explanation, these two types of lenses will also be referred to as premium and standard, respectively. The production of each type of lens is a more deterministic process than that of sensors, and thus the number of produced lenses of each type is easily controlled.

It will be assumed that the manufacturer is primarily interested in its premium line of cameras. A number $S$ of sensors is to be produced, and the amount of premium lenses that will be required needs to be planned in advance. To this end, an estimate $\hatvap$ is made of the proportion $p$ of sensors that will turn out to be of the premium type (this can be done using inverse binomial sampling); and $S \hatvap$ premium lenses are made available. The actual proportion of premium sensors, $p$, may be lower than $\hatvap$, in which case some of the premium lenses will be left unused; or it may be greater, and then some of the premium sensors will not be used. In either case, some resources are wasted. If the cost associated with each unused part is $a$ for a sensor and $b$ for a lens, the risk computed from the loss function \eqref{eq: loss upper ratio gen} is the average cost of wasted resources per assembled premium camera unit.

The rest of the paper analyzes inverse binomial sampling under the loss functions \eqref{eq: loss abs error gen} and \eqref{eq: loss upper ratio gen}. The first has already been analyzed for the particular case $a=b$ by \citet{Mendo09a}, and the generalization to $a \neq b$ will be seen to be rather straightforward. The second function has not been dealt with before, to the author's knowledge, and its analysis turns out to be more difficult. Although the main focus of the paper is on the second, results for the first are also interesting by themselves. In each case, estimators are given in Section \ref{parte: res} such that the risk for $p \in (0,1)$ is guaranteed to be lower than its asymptotic value. Section \ref{parte: disc} discusses these results and makes a comparison with the optimum performance that could be achieved by using other estimators. It is shown that the proposed estimators are approximately minimax if $a/b$ is close to $1$; and for $a=b$ they are asymptotically minimax as $\nnum \rightarrow \infty$. Section \ref{parte: proofs} contains the proofs to all results.

\section{Main results}
\label{parte: res}

Consider a sequence of Bernoulli trials with probability of success $p$, and a random stopping time $\vanden$ given by inverse binomial sampling with $\nnum \in \mathbb N$. Let $\ff{x}{i}$ denote $x(x-1) \cdots (x-i+1)$, for $x \in \mathbb R$, $i \in \mathbb N$; and $\ff{x}{0}=1$. The normalized lower incomplete gamma function is defined as
\begin{align}
\gamma(t,u) & = \frac{1}{\Gamma(t)} \int_0^u s^{t-1}\exp(-s) \,\diff s,
\end{align}
and satisfies the following well-known relationship \citep[eq.~(6.5.21)]{Abramowitz70}, which will be used throughout the paper:
\begin{equation}
\label{eq: gamma vecinas}
\gamma(t-1,u) = \gamma(t,u) + \frac{u^{t-1} \exp(-u)}{\Gamma(t)}.
\end{equation}
The random variable $\vanden$ has a negative binomial distribution, with probability function $f_\nnum (\nden) = \Pr[\vanden = \nden]$ given by
$
f_\nnum (\nden)
= \ff{(\nden-1)}{\nnum-1} p^\nnum (1-p)^{\nden-\nnum} / (\nnum-1)!,
$
$\nden \geq \nnum$.
The corresponding distribution function will be denoted as $F_\nnum(\nden)$. Similarly, the probability function of a binomial random variable with parameters $\nden$ and $p$ is denoted as $b_{\nden,p}(i) = \ff{\nden}{i}p^i (1-p)^{\nden-i} /i!$, $0 \leq i \leq \nnum$. For an arbitrary nonrandomized estimator $\hatvap = \fest(\vanden)$ and a loss function $L(\hatvap/p)$, the risk $\risk(p)$ is
\begin{equation}
\risk(p) = \E[L(\hatvap/p)] = \sum_{\nden=\nnum}^\infty f_\nnum (\nden) L(\fest(\nden)/p).
\end{equation}

For $\nnum \geq 2$ and $a,b \geq 0$, the loss functions \eqref{eq: loss abs error gen} and \eqref{eq: loss upper ratio gen} satisfy the sufficient conditions of \citet[theorem 1]{Mendo10_env}, and thus any estimator $\hatvap = \fest(\vanden)$ with $\lim_{\nden \rightarrow \infty} \nden \fest(\nden) = \Omega > 0$ has an asymptotic risk as $p \rightarrow 0$, which can be computed as
\begin{equation}
\label{eq: riskas nu}
\lim_{p \rightarrow 0} \risk(p) = \frac{1}{(\nnum-1)!} \int_0^\infty \nu^{\nnum-1} \exp(-\nu) L(\Omega/\nu) \,\diff \nu.
\end{equation}
In particular, this holds for any estimator that can be expressed as
\begin{equation}
\label{eq: estim Omega d}
\hatvap = \frac{\Omega}{\vanden + d}
\end{equation}
with $\Omega >0$, $d > -\nnum$.

Consider a generic estimator of the form \eqref{eq: estim Omega d}. Denoting $\ndcero = \lfloor \Omega/p - d\rfloor$, the risk associated with the loss function \eqref{eq: loss abs error gen} can be written as
\begin{equation}
\label{eq: abs error gen risk 1}
\begin{split}
\risk(p) & = a \sum_{\nden=\ndcero+1}^{\infty} \left( 1 - \frac{\Omega}{(\nden+d)p} \right) f_\nnum(\nden) + b \sum_{\nden=\nnum}^{\ndcero} \left( \frac{\Omega}{(\nden+d)p} - 1 \right) f_\nnum(\nden) \\
& = -a \sum_{\nden=\nnum}^{\infty} \left( \frac{\Omega}{(\nden+d)p} - 1 \right) f_\nnum(\nden) +
(a+b) \sum_{\nden=\nnum}^{\ndcero} \left( \frac{\Omega}{(\nden+d)p} - 1 \right) f_\nnum(\nden).
\end{split}
\end{equation}
Particularizing to $\Omega=\nnum-1$ and $d=-1$, which yields the uniformly minimum variance unbiased (UMVU) estimator \citep{Mikulski76}, and taking into account the identities \citep{Mendo09a}
\begin{align}
\label{eq: f nnum-1 f nnum}
f_{\nnum-1}(\nden-1) & = \frac{(\nnum-1)f_\nnum(\nden)}{(\nden-1)p} & \text{for } \nnum & \geq 2,\ \nden \geq \nnum, \\
\label{eq: F nnum-1 F nnum b}
F_{\nnum-1}(\nden-1)& = F_\nnum(\nden) + (1-p) b_{\nden-1,p}(\nnum-1) & \text{for } \nnum & \geq 2,\ \nden \geq \nnum,
\end{align}
it is seen that, for $\nnum \geq 2$, the first summand in \eqref{eq: abs error gen risk 1} becomes $0$, and
\begin{equation}
\label{eq: abs error gen risk 2}
\begin{split}
\risk(p) & = (a+b) \sum_{\nden=\nnum}^{\ndcero} ( f_{\nnum-1}(\nden-1) - f_\nnum(\nden) ) = (a+b) (F_{\nnum-1}(\ndcero-1) - F_{\nnum}(\ndcero)) \\
& = (a+b) (1-p) b_{\ndcero-1,p}(\nnum-1).
\end{split}
\end{equation}
The case $a=b=1$ is analyzed in \citet{Mendo09a}. Comparing \eqref{eq: abs error gen risk 2} with \citet[eq.~(12)]{Mendo09a}, the expression of the risk for $a,b$ arbitrary is seen to be a straightforward generalization of that for $a=b=1$. As a consequence, the following result holds.

\begin{theorem}
\label{teo: loss abs error gen}
Consider the loss function given by \eqref{eq: loss abs error gen} with $a,b \geq 0$, $(a,b) \neq (0,0)$. For $\nnum \geq 2$, the risk $\risk(p)$ associated with the estimator $\hatvap = (\nnum-1)/(\vanden-1)$ satisfies
\begin{equation}
\label{eq: teo: loss abs error gen}
\risk(p) < \lim_{p \rightarrow 0} \risk(p) \quad \text{for any } p \in (0,1),
\end{equation}
with
\begin{equation}
\label{eq: teo: loss abs error gen, riskas}
\lim_{p \rightarrow 0} \risk(p) = \frac{(a+b) (\nnum-1)^{\nnum-2} \exp(-\nnum+1)} {(\nnum-2)!}.
\end{equation}
\end{theorem}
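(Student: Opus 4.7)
The plan is to exploit the explicit risk formula derived just before the statement, namely
\[
\risk(p) = (a+b)(1-p)\, b_{\ndcero-1,p}(\nnum-1),\qquad \ndcero=\lfloor (\nnum-1)/p + 1\rfloor,
\]
and reduce the theorem to the already-treated symmetric case $a=b=1$. Since the dependence on the loss parameters enters only through the multiplicative factor $(a+b)$, it suffices to establish the strict inequality \eqref{eq: teo: loss abs error gen} for $a=b=1$ (equivalently, for the normalized mean absolute error $|\hatvap/p-1|$), because multiplying a strict inequality between nonnegative quantities by the positive constant $(a+b)/2$ preserves strictness whenever $(a,b)\neq(0,0)$.

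First I would invoke the corresponding result from \citet{Mendo09a}, which precisely analyzes the UMVU estimator $\hatvap=(\nnum-1)/(\vanden-1)$ under the loss $|\hatvap/p-1|$ and establishes that the function $p\mapsto (1-p)b_{\ndcero-1,p}(\nnum-1)$ is strictly less than its $p\to 0$ limit on $(0,1)$. Combined with the proportionality above, this immediately yields \eqref{eq: teo: loss abs error gen}.

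Second, to evaluate $\lim_{p\to 0}\risk(p)$, I would apply the general asymptotic formula \eqref{eq: riskas nu} with $\Omega=\nnum-1$ (the limit of $\nden(\nnum-1)/(\nden-1)$). Splitting the integral at $\nu=\nnum-1$ according to the two branches of the piecewise-linear $L$, each piece is a finite combination of moments of $\nu^{\nnum-1}\exp(-\nu)$ together with lower/upper incomplete gamma evaluations at $\nnum-1$, and after cancellation only the boundary term survives; a cleaner alternative is to pass to the limit in the formula above via the Poisson approximation, using $(\ndcero-1)p\to\nnum-1$, to obtain
\[
\lim_{p\to 0}\risk(p) = (a+b)\,\frac{(\nnum-1)^{\nnum-1}\exp(-\nnum+1)}{(\nnum-1)!},
\]
which, upon simplification via $(\nnum-1)/(\nnum-1)! = 1/(\nnum-2)!$, matches \eqref{eq: teo: loss abs error gen, riskas}.

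The only real obstacle is the appeal to \citet{Mendo09a} for the strict monotone bound in the symmetric case; everything else is linear scaling in $(a+b)$ and a routine Poisson limit. Since the authors themselves note that their risk expression is a straightforward generalization of Mendo09a's eq.~(12), this reduction is exactly what they have in mind, and no fresh analytic work is needed beyond verifying the asymptotic constant.
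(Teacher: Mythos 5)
Your proposal is correct and follows essentially the same route as the paper: the authors' own proof consists precisely of invoking the closed form \eqref{eq: abs error gen risk 2}, $\risk(p) = (a+b)(1-p)\,b_{\ndcero-1,p}(\nnum-1)$, observing that the dependence on $(a,b)$ enters only through the positive factor $a+b$, and deferring the strict bound and the limit to the analysis of the symmetric case in \citet{Mendo09a}. Your additional verification of the asymptotic constant via the Poisson limit is a harmless elaboration of the same argument.
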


In addition, as will be seen in Section~\ref{parte: disc}, under certain conditions this estimator approaches the asymptotically optimum estimator discussed in \citet{Mendo10_env}.

For the loss function \eqref{eq: loss upper ratio gen}, the risk associated with an estimator of the form \eqref{eq: estim Omega d} can be decomposed in a similar way as for \eqref{eq: loss abs error gen}. Namely, $\risk(p) = \risk_1(p)+\risk_2(p)$ with
\begin{align}
\label{eq: upper ratio gen risk1 1}
\risk_1(p) & = a \sum_{\nden=\ndcero+1}^{\infty} \left( \frac{(\nden+d)p}{\Omega} - 1 \right) f_\nnum(\nden), \\
\label{eq: upper ratio gen risk2 1}
\risk_2(p) & = b \sum_{\nden=\nnum}^{\ndcero} \left( \frac{\Omega}{(\nden+d)p} - 1 \right) f_\nnum(\nden).
\end{align}
Assuming $d \leq 0$ in \eqref{eq: upper ratio gen risk1 1} and taking into account that, as per \eqref{eq: f nnum-1 f nnum}, $\nden p f_{\nnum}(\nden) = \nnum f_{\nnum+1}(\nden+1)$, it follows that
\begin{equation}
\label{eq: upper ratio gen risk1 2}
\risk_1(p) \leq a \sum_{\nden=\ndcero+1}^{\infty} \left( \frac{\nden p}{\Omega} - 1 \right) f_\nnum(\nden)
= \frac{a\nnum}{\Omega} (1-F_{\nnum+1}(\ndcero+1)) + a(F_\nnum(\ndcero)-1) ,
\end{equation}
with strict inequality if $d<0$. As for $\risk_2(p)$, assuming $d \geq -1$, it stems from \eqref{eq: f nnum-1 f nnum} and \eqref{eq: upper ratio gen risk2 1} that
\begin{equation}
\label{eq: upper ratio gen risk2 2}
\risk_2(p) \leq b \sum_{\nden=\nnum}^{\ndcero} \left( \frac{\Omega}{(\nden-1)p} - 1 \right) f_\nnum(\nden)
= \frac{b\Omega}{\nnum-1} F_{\nnum-1}(\ndcero-1) - b F_\nnum(\ndcero),
\end{equation}
with strict inequality if $d > -1$ and $\ndcero \geq \nnum$. As a result of \eqref{eq: upper ratio gen risk1 2} and \eqref{eq: upper ratio gen risk2 2}, for any $d \in [-1,0]$ the risk satisfies
\begin{equation}
\begin{split}
\label{eq: risk 1}
\risk(p) \leq \frac{b\Omega}{\nnum-1} F_{\nnum-1}(\ndcero-1) + (a-b) F_\nnum(\ndcero) - \frac{a\nnum}{\Omega} F_{\nnum+1}(\ndcero+1) + a\left(\frac{\nnum}{\Omega} - 1 \right).
\end{split}
\end{equation}
The right-hand side of \eqref{eq: risk 1} is greatly simplified if $\Omega$ is chosen as any value $\Omegail>0$ such that
\begin{equation}
\label{eq: cond Omega}
\frac{a\nnum}{\Omegail} - \frac{b\Omegail}{\nnum-1} = a-b,
\end{equation}
for in that case, applying the identity \eqref{eq: F nnum-1 F nnum b},
\begin{equation}
\begin{split}
\label{eq: risk 2}
\risk(p) & \leq \frac{b\Omegail}{\nnum-1} (F_{\nnum-1}(\ndcero-1) - F_\nnum(\ndcero)) + \frac{a\nnum}{\Omegail} (F_\nnum(\ndcero)-F_{\nnum+1}(\ndcero+1)) + a\left(\frac{\nnum}{\Omegail} - 1 \right) \\
& = \frac{b\Omegail}{\nnum-1} (1-p) b_{\ndcero-1,p}(\nnum-1) + \frac{a\nnum}{\Omegail} (1-p) b_{\ndcero,p}(\nnum) + a\left(\frac{\nnum}{\Omegail} - 1 \right).
\end{split}
\end{equation}
The advantage of this expression is that the terms $(1-p) b_{\ndcero-1,p}(\nnum-1)$ and $(1-p) b_{\ndcero,p}(\nnum)$ lend themselves to analysis more easily than the distribution functions in \eqref{eq: risk 1}.

The condition \eqref{eq: cond Omega} on $\Omegail$ has a single positive solution for $a,b \geq 0$, $(a,b) \neq (0,0)$, namely
\begin{equation}
\label{eq: Omega il}
\Omegail =
\begin{cases}
(\nnum-1) \left( 1 + \frac{a+b}{2b} \left( \sqrt{1 + \frac{4ab}{(\nnum-1)(a+b)^2}} - 1\right) \right) & \text{for } a,b >0,\\
\nnum-1 & \text{for } a=0,\ b>0, \\
\nnum & \text{for } b=0,\ a>0.
\end{cases}
\end{equation}
It is easily seen that this reduces to $\sqrt{\nnum(\nnum-1)}$ for $a=b > 0$. In addition, the following holds.

\begin{proposition}
\label{prop: Omega invlin cota}
The value of $\Omegail$ given by \eqref{eq: Omega il} lies in the interval $(\nnum-1, \nnum)$ for $a,b > 0$.
\end{proposition}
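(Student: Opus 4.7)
The plan is to observe that the defining equation \eqref{eq: cond Omega} for $\Omegail$ can be written as $h(\Omegail)=a-b$ where
\[
h(\Omega) = \frac{a\nnum}{\Omega} - \frac{b\Omega}{\nnum-1}.
\]
For $a,b>0$ this function is strictly decreasing and continuous on $(0,\infty)$, ranging from $+\infty$ to $-\infty$, so there is a unique positive solution and the problem reduces to showing $h(\nnum) < a-b < h(\nnum-1)$.

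First I would evaluate $h$ at the two endpoints. A direct computation gives $h(\nnum-1) = a\nnum/(\nnum-1) - b$ and $h(\nnum) = a - b\nnum/(\nnum-1)$. The inequality $h(\nnum-1) > a-b$ reduces, after cancelling $-b$ from both sides, to $a\nnum/(\nnum-1) > a$, which holds since $a>0$ and $\nnum/(\nnum-1)>1$. Similarly, $h(\nnum) < a-b$ reduces to $b\nnum/(\nnum-1) > b$, which holds for the same reason. By strict monotonicity of $h$ together with the intermediate value theorem, the unique positive root $\Omegail$ of \eqref{eq: cond Omega} lies strictly between $\nnum-1$ and $\nnum$.

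There is essentially no obstacle; the main point is simply to notice that \eqref{eq: Omega il} need not be manipulated directly. Working through the closed-form expression involving the square root would also yield the result, but testing the values $\Omega=\nnum-1$ and $\Omega=\nnum$ in the implicit equation \eqref{eq: cond Omega} is both cleaner and makes the monotonicity argument transparent.
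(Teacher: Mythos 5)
Your argument is correct. The function $h(\Omega)=a\nnum/\Omega-b\Omega/(\nnum-1)$ is indeed strictly decreasing and continuous on $(0,\infty)$ for $a,b>0$ (and $\nnum\geq 2$, which is the standing assumption throughout), it runs from $+\infty$ to $-\infty$, and your endpoint evaluations $h(\nnum-1)=a\nnum/(\nnum-1)-b>a-b$ and $h(\nnum)=a-b\nnum/(\nnum-1)<a-b$ are exactly right, so the unique positive root of \eqref{eq: cond Omega} lies strictly in $(\nnum-1,\nnum)$. The paper itself states this proposition without an explicit proof, so there is nothing to compare line by line; your route through the implicit equation \eqref{eq: cond Omega} is the natural one and is consistent with how the paper uses that equation (it is the same monotonicity observation that underlies the uniqueness claim for \eqref{eq: Omega il} and the analogous endpoint argument in the paper's proof of Lemma~\ref{lem: Omega opt invlin cota}). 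The alternative you mention, working from the closed form \eqref{eq: Omega il}, also goes through quickly: the square-root term exceeds $1$, giving $\Omegail>\nnum-1$, and the bound $\sqrt{1+x}<1+x/2$ gives $\Omegail<(\nnum-1)+a/(a+b)<\nnum$; but your implicit-equation version avoids any algebra with the radical and makes the uniqueness transparent, so it is the cleaner choice.
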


As a consequence of Proposition~\ref{prop: Omega invlin cota}, for any $a,b \geq 0$, $(a,b) \neq (0,0)$, the value $\Omegail$ defined by \eqref{eq: Omega il} satisfies $\Omegail \in [\nnum-1, \nnum]$ . Taking into account that $\hatvapUMVU = (\nnum-1)/(\vanden-1)$ is the UMVU estimator and that $\hatvapML = \nnum/\vanden$ is the maximum likelihood (ML) estimator \citep{Best74}, the estimator $\hatvap$ given by \eqref{eq: estim Omega d} with $\Omega \in [\nnum-1, \nnum]$ and $d \in [-1,0]$ is seen to be a ``reasonable'' one, in the sense that it is ``close'' to the UMVU and ML estimators. As will be seen in Section~\ref{parte: disc}, in certain cases the proposed estimator is also close to the asymptotically optimum estimator in the sense of \citet{Mendo10_env}.

The preceding arguments justify that the estimator given by \eqref{eq: estim Omega d} with $\Omega \in [\nnum-1, \nnum]$ and $d \in [-1,0]$ is worth considering. In fact, for adequate choices of $\Omega$ and $d$, it satisfies the important property that the risk is guaranteed not to exceed its asymptotic value, as established by the next theorem.

\begin{theorem}
\label{teo: loss upper ratio gen}
Consider the loss function given by \eqref{eq: loss upper ratio gen} with $a,b \geq 0$, $(a,b) \neq (0,0)$. For $\nnum \geq 2$, the estimator $\hatvap = \Omegail/\vanden$ with $\Omegail$ given by \eqref{eq: Omega il} satisfies
\begin{equation}
\label{eq: teo: loss upper ratio gen}
\risk(p) < \lim_{p \rightarrow 0} \risk(p) \quad \text{for any } p \in (0,1),
\end{equation}
with
\begin{equation}
\label{eq: teo: loss upper ratio gen, riskas}
\lim_{p \rightarrow 0} \risk(p) = a \left( \frac{\nnum}{\Omegail} - 1 \right) + \left( \frac{a\nnum}{\Omegail} + b \right) \frac{\Omegail^{\nnum-1} \exp(-\Omegail)}{(\nnum-1)!}.
\end{equation}
\end{theorem}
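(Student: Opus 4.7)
The plan is to treat the theorem's two assertions — the closed form \eqref{eq: teo: loss upper ratio gen, riskas} for $\lim_{p\to 0}\risk(p)$ and the strict inequality $\risk(p)<\lim_{p\to 0}\risk(p)$ on $(0,1)$ — separately, with \eqref{eq: risk 2} as the common workhorse. By Proposition~\ref{prop: Omega invlin cota}, $\Omegail\in[\nnum-1,\nnum]$, so the hypothesis $d\in[\Omegail-\nnum,0]$ forces $d\in[-1,0]$ and \eqref{eq: risk 2} is applicable.

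For the asymptotic value I would apply \eqref{eq: riskas nu} with $\Omega=\Omegail$ and split the integral at $\nu=\Omegail$ according to the two branches of \eqref{eq: loss upper ratio gen}. Each of the four resulting pieces can be written in terms of $\gamma(\nnum-1,\Omegail)$, $\gamma(\nnum,\Omegail)$ and $\gamma(\nnum+1,\Omegail)$, and the recurrence $\gamma(t+1,u) = \gamma(t,u) - u^t\exp(-u)/\Gamma(t+1)$ reduces everything to a single $\gamma(\nnum,\Omegail)$-coefficient plus explicit exponentials. That coefficient works out to $(a-b) + b\Omegail/(\nnum-1) - a\nnum/\Omegail$, which vanishes by \eqref{eq: cond Omega}; a second use of \eqref{eq: cond Omega} in the form $b\Omegail/(\nnum-1)+a = a\nnum/\Omegail + b$ then rewrites the surviving terms as \eqref{eq: teo: loss upper ratio gen, riskas}. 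This step is purely algebraic.

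For the strict inequality, call the right-hand side of \eqref{eq: risk 2} $G(p)$. The slacks in \eqref{eq: upper ratio gen risk1 2} and \eqref{eq: upper ratio gen risk2 2} are $O(p)$, so $\lim_{p\to 0}G(p)=\lim_{p\to 0}\risk(p)$ is the quantity just computed, and it suffices to establish $G(p) < \lim_{p\to 0}G(p)$ on $(0,1)$. The identity $(1-p) b_{\ndcero,p}(\nnum) = (\ndcero p/\nnum)(1-p) b_{\ndcero-1,p}(\nnum-1)$ collapses the non-constant part of $G(p)$ into a single product,
\[
G(p) - a\left(\frac{\nnum}{\Omegail}-1\right) = \left(\frac{b\Omegail}{\nnum-1} + \frac{a\ndcero p}{\Omegail}\right)(1-p) b_{\ndcero-1,p}(\nnum-1),
\]
whose $p\to 0^+$ limit is $(b\Omegail/(\nnum-1)+a)\,\Omegail^{\nnum-1}\exp(-\Omegail)/(\nnum-1)!$, consistently with what was computed above.

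The technical core is showing that this product stays strictly below its limit throughout $(0,1)$. Writing $\ndcero = \Omegail/p - d - \epsilon$ with $\epsilon\in[0,1)$ so that $\ndcero p - \Omegail = -dp - \epsilon p$ may take either sign, the strategy I would follow is that of \citet{Mendo09a}, who handle the analogous estimate for $\Omegail=\nnum-1$, $d=-1$: expand $\log(1-p)$ beyond first order, combine with a Stirling-type bound on $\binom{\ndcero-1}{\nnum-1}$, and isolate a strictly negative ``Poisson gap'' in the exponent of $(1-p) b_{\ndcero-1,p}(\nnum-1)$. The new obstacle is that this gap must absorb both the first-order slack from $\ndcero p - \Omegail$ in the binomial factor and the variable prefactor $a\ndcero p/\Omegail$, uniformly in $p$, $d$ and $\epsilon$; a case split on the sign of $\ndcero p - \Omegail$ seems unavoidable. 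The degenerate cases $a=0$ (which reduces to the one-sided analogue of Theorem~\ref{teo: loss abs error gen}) and $b=0$ (for which $d$ is pinned to $0$) can be handled directly.
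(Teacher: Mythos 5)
Your reduction to \eqref{eq: risk 2} and your computation of the asymptotic value are correct and match the paper: the same use of \eqref{eq: riskas nu}, \eqref{eq: gamma vecinas} and \eqref{eq: cond Omega} makes the $\gamma(\nnum,\Omegail)$ coefficient vanish and yields \eqref{eq: teo: loss upper ratio gen, riskas}. The gap is in the strict inequality, which is precisely the part you leave as a sketch. Saying that one should ``expand $\log(1-p)$ beyond first order, combine with a Stirling-type bound, and isolate a strictly negative Poisson gap'' following \citet{Mendo09a} is a description of the difficulty, not a proof: that reference handles a single term with $\Omega=\nnum-1$, $d=-1$, for which the binomial factor, restricted to each interval $I_\ndcero$, attains its maximum exactly at the left endpoint $\pinf=(\nnum-1)/\ndcero$ (the mode of $b_{\ndcero-1,p}(\nnum-1)$ in $p$). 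Here $\Omegail\in(\nnum-1,\nnum)$ and $d$ ranges over an interval, so for each $\ndcero$ the maximizing $p$ may be either the interior mode $\pmax$ or the endpoint $\pinf$ (respectively $\psup$ for the second term), and which case occurs depends on $\ndcero$, $d$ and $\Omegail$. The paper resolves this with the case analysis \eqref{eq: Y 1 max}--\eqref{eq: Y 2 max}, the thresholds $\ndcerocuno$, $\ndcerocdos$, and the monotonicity statement of Lemma~\ref{lem: y nnum Taylor}, whose proof rests on showing that all Taylor coefficients $c_j$ in \eqref{eq: c j} are nonnegative (Lemma~\ref{lem: suma int} plus the separate treatment of $c_1$). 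None of that analysis appears in your proposal, and your own remark that the gap ``must absorb both the first-order slack from $\ndcero p-\Omegail$ and the variable prefactor $a\ndcero p/\Omegail$, uniformly in $p$, $d$ and $\epsilon$'' concedes that the decisive estimate is still missing.

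A secondary point: your collapse of the two binomial terms into the single product
\begin{equation}
\left(\frac{b\Omegail}{\nnum-1}+\frac{a\ndcero p}{\Omegail}\right)(1-p)\,b_{\ndcero-1,p}(\nnum-1)
\end{equation}
is algebraically correct but makes the job harder rather than easier. Since $d\leq 0$, the prefactor $a\ndcero p/\Omegail$ can exceed its limiting value $a$ within an interval $I_\ndcero$ while the binomial factor is below its limit, so the product cannot be bounded factor by factor, and you are forced into the two-dimensional trade-off you describe. The paper instead keeps the two terms separate, with \emph{constant} prefactors $b\Omegail/(\nnum-1)$ and $a\nnum/\Omegail$, and bounds each of $Y_\nnum(\ndcero,p)$ and $Y_{\nnum+1}(\ndcero+1,p)$ by its own Poisson limit; the sum of those bounds is exactly \eqref{eq: teo: loss upper ratio gen, riskas}. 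I would recommend reverting to the separated form before attempting the monotonicity argument.
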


\section{Discussion and additional properties}
\label{parte: disc}

\subsection{Significance of the results}

It has  been shown in Section~\ref{parte: res} that similar results to those already known for mean absolute error, mean squared error and confidence level also hold for generalized mean absolute error (Theorem~\ref{teo: loss abs error gen}) and generalized mean symmetric ratio (Theorem~\ref{teo: loss upper ratio gen}). Specifically, it has been proved that, for the proposed estimators, $\sup_{p \in (0,1)} \risk(p) = \lim_{p \rightarrow 0} \risk(p)$. In the following, $\bar\risk$ will denote the value of $\lim_{p \rightarrow 0} \risk(p)$, or equivalently $\sup_{p \in (0,1)} \risk(p)$, for the estimators in Theorems~\ref{teo: loss abs error gen} and \ref{teo: loss upper ratio gen}.

The importance of these results lies in the fact that no knowledge is required about $p$. Thus, given any desired value $\riskej$ for the risk, an adequate $\nnum$ can be selected such that the risk is guaranteed not to exceed $\riskej$, irrespective of $p$. Namely, it suffices to choose $\nnum$ as the minimum value for which $\bar\risk$, computed from \eqref{eq: teo: loss abs error gen, riskas} or from \eqref{eq: teo: loss upper ratio gen, riskas}, is less than or equal to $\riskej$. As an illustration, Figure~\ref{fig: barrisk_N_il} depicts $\bar\risk$ as a function of $\nnum$ for the loss given by \eqref{eq: loss upper ratio gen} with $a=b=1$.
It is seen, for example, that $\nnum=75$ suffices to guarantee a risk lower than $0.1$, that is, a mean symmetric ratio lower than $1.1$.

\begin{figure}%
\centering%
\includegraphics[width = .65\textwidth]{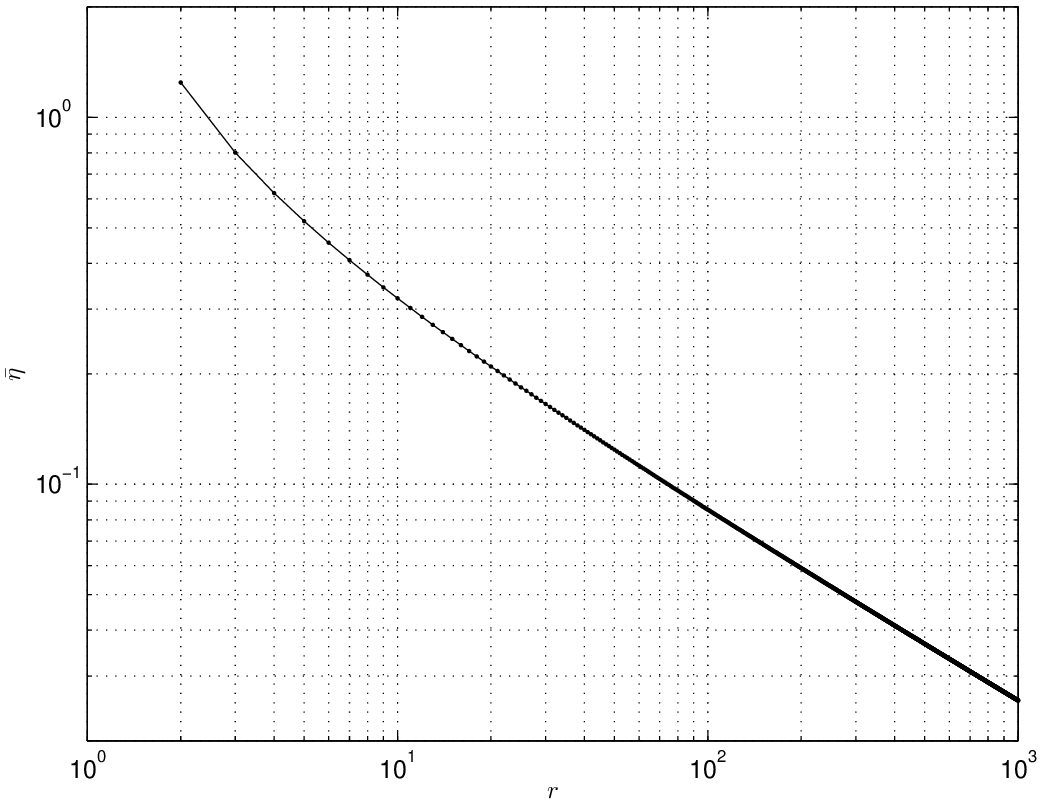}%
\caption{Risk guaranteed not to be exceeded for inverse-linear loss \eqref{eq: loss upper ratio gen} with $a=b=1$}%
\label{fig: barrisk_N_il}%
\end{figure}%

\subsection{Comparison with minimax estimators}

The presented results are valid for specific estimators, given by \eqref{eq: estim Omega d} with certain fixed values for $\Omega$ and $d$. It is natural to ask to what extent the results could be improved by considering other estimators, i.e.~how much lower risks could be guaranteed not to be exceeded (or equivalently how much $\sup_{p \in (0,1)} \risk(p)$ could be reduced). By definition, an estimator that is optimum according to this criterion (i.e.~which minimizes $\sup_{p \in (0,1)} \risk(p)$ over all possible estimators), if it exists, is a \emphmio{minimax} estimator.

This question can be addressed on the basis of the analysis in \citet{Mendo10_env}. For $a, b > 0$, both \eqref{eq: loss abs error gen} and \eqref{eq: loss upper ratio gen} satisfy the assumptions of \citet[theorem 3]{Mendo10_env}. This implies that there exists a value of $\Omega$, denoted as $\Omega^*$, such that any estimator $\hatvap = \fest(\vanden)$ with $\lim_{\nden \rightarrow \infty} \nden \fest(\nden) = \Omega^*$ minimizes $\limsup_{p \rightarrow 0} \risk(p)$ over all estimators, including randomized ones. Thus any such estimator is asymptotically optimum, in the sense of achieving the minimum possible $\limsup_{p \rightarrow 0} \risk(p)$. This minimum, which will be denoted as $\risk^*$, restricts the values $\riskej$ that the risk can be guaranteed not to exceed for $p$ arbitrary. Namely, if an estimator guarantees that $\risk(p) \leq \riskej$ for a given $\riskej$, then necessarily $\riskej \geq \risk^*$. As a consequence, the risk $\bar\risk$ that is guaranteed not to be exceeded by the specific estimators considered in Section~\ref{parte: res} is at most $\bar\risk/\risk^*$ times larger than what could be achieved by a minimax estimator.

The value $\risk^*$ is obtained as follows. Consider the loss function \eqref{eq: loss abs error gen} first. For $\Omega$ arbitrary, \eqref{eq: riskas nu} gives
\begin{equation}
\label{eq: bar risk ll}
\lim_{p \rightarrow 0} \risk(p) = \frac{(a+b)\Omega \gamma(\nnum-1,\Omega)}{\nnum-1} - (a+b) \gamma(\nnum,\Omega) + a \left( 1 - \frac{\Omega}{\nnum-1}\right).
\end{equation}
Its derivative
\begin{equation}
\label{eq: diff bar risk diff Omega ll}
\frac{\diff}{\diff\Omega} \lim_{p \rightarrow 0} \risk(p) = \frac{(a+b)\gamma(\nnum-1,\Omega) - a}{\nnum-1}
\end{equation}
is seen to be monotone increasing. Therefore the minimizing value $\Omega^*$ is unique, and is determined by the condition $\diff(\lim_{p \rightarrow 0} \risk(p)) /\diff\Omega = 0$, that is,
\begin{equation}
\label{eq: Omega opt ll}
\gamma(\nnum-1,\Omega^*) = \frac{a}{a+b}.
\end{equation}
Setting $\Omega=\Omega^*$ in \eqref{eq: bar risk ll}, substituting \eqref{eq: Omega opt ll} and making use of \eqref{eq: gamma vecinas},
\begin{equation}
\label{eq: risk opt ll}
\risk^* = \frac{(a+b) {\Omega^*}^{\nnum-1} \exp(-\Omega^*)} {(\nnum-1)!}.
\end{equation}
The value $\Omega^*$ can be computed numerically from \eqref{eq: Omega opt ll}, and $\risk^*$ is then obtained by means of \eqref{eq: risk opt ll}.

Regarding the loss function \eqref{eq: loss upper ratio gen}, for $\Omega$ arbitrary \eqref{eq: riskas nu} gives
\begin{equation}
\label{eq: bar risk il}
\lim_{p \rightarrow 0} \risk(p) = \frac{b\Omega \gamma(\nnum-1,\Omega)}{\nnum-1} + (a-b) \gamma(\nnum,\Omega) - \frac{a\nnum \gamma(\nnum+1,\Omega)}{\Omega} + a \left(\frac \nnum \Omega - 1 \right).
\end{equation}
Again, it is easily seen that
\begin{equation}
\label{eq: diff bar risk diff Omega il}
\frac{\diff}{\diff\Omega} \lim_{p \rightarrow 0} \risk(p) =
\frac{b\gamma(\nnum-1,\Omega)}{\nnum-1}
- \frac{a\nnum (1-\gamma(\nnum+1,\Omega))}{\Omega^2}
\end{equation}
is monotone increasing, and thus there is a single minimizing value $\Omega^*$, which satisfies
\begin{equation}
\label{eq: Omega opt il}
\frac{{\Omega^*}^2}{\nnum(\nnum-1)} = \frac{a(1-\gamma(\nnum+1,\Omega^*))}{b\gamma(\nnum-1,\Omega^*)}.
\end{equation}
From \eqref{eq: gamma vecinas}, \eqref{eq: bar risk il} and \eqref{eq: Omega opt il},
\begin{equation}
\label{eq: risk opt il}
\risk^* = \left(a+\left( \frac{2\Omega^*}{\nnum-1}-1\right)b \right) \gamma(\nnum-1,\Omega^*) + \frac{(b-a){\Omega^*}^{\nnum-1} \exp(-\Omega^*)}{(\nnum-1)!} - a.
\end{equation}
The expressions \eqref{eq: Omega opt il} and \eqref{eq: risk opt il} allow numerically computing $\risk^*$.

Figure~\ref{fig: rel_opt} shows, for the loss functions and estimators considered in Theorems~\ref{teo: loss abs error gen} and \ref{teo: loss upper ratio gen}, the degradation factor $\bar\risk/\risk^*$ as a function of $a/b$, with $\nnum$ as a parameter. As is seen, for $a/b$ not too far from $1$ the degradation factor is close to $1$, that is, the considered estimators are nearly optimum. Furthermore, there is a value of $a/b$ for which each estimator is precisely optimum, i.e.~minimax, as established by the following.

\begin{figure}%
\centering%
\subfigure[Linear-linear loss \eqref{eq: loss abs error gen}]{%
\label{fig: rel_opt_ll}%
\includegraphics[width = .5\textwidth]{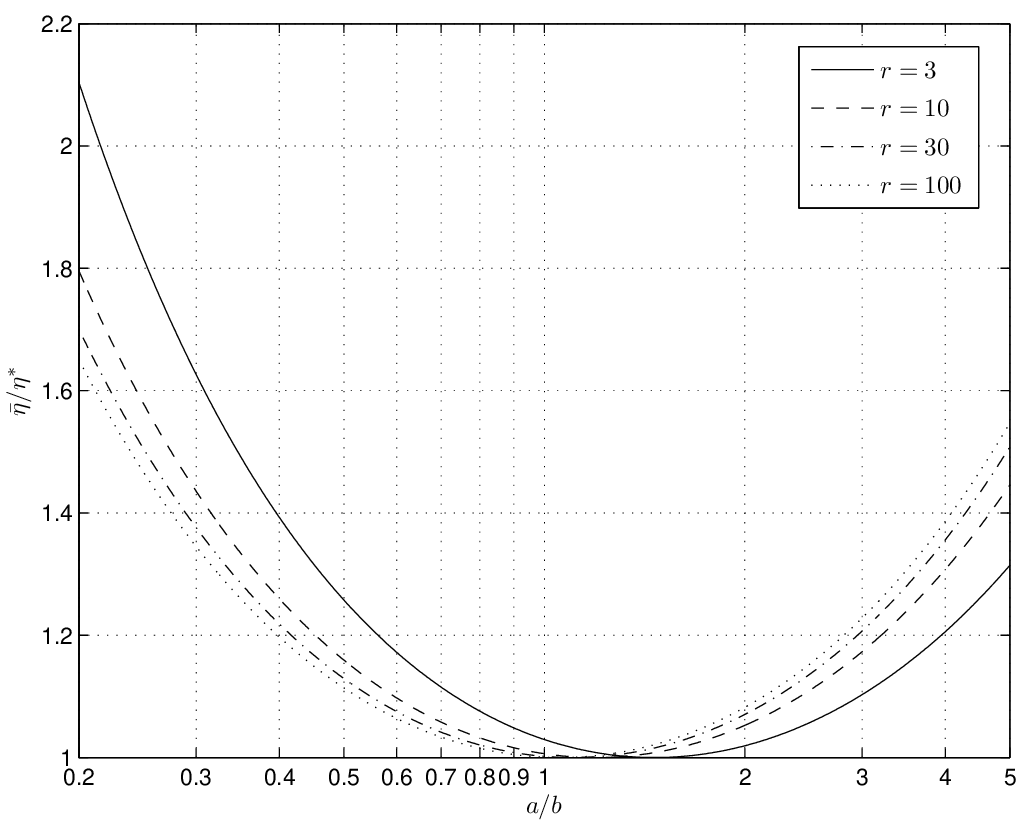}%
}%
\subfigure[Inverse-linear loss \eqref{eq: loss upper ratio gen}]{%
\label{fig: rel_opt_il}%
\includegraphics[width = .5\textwidth]{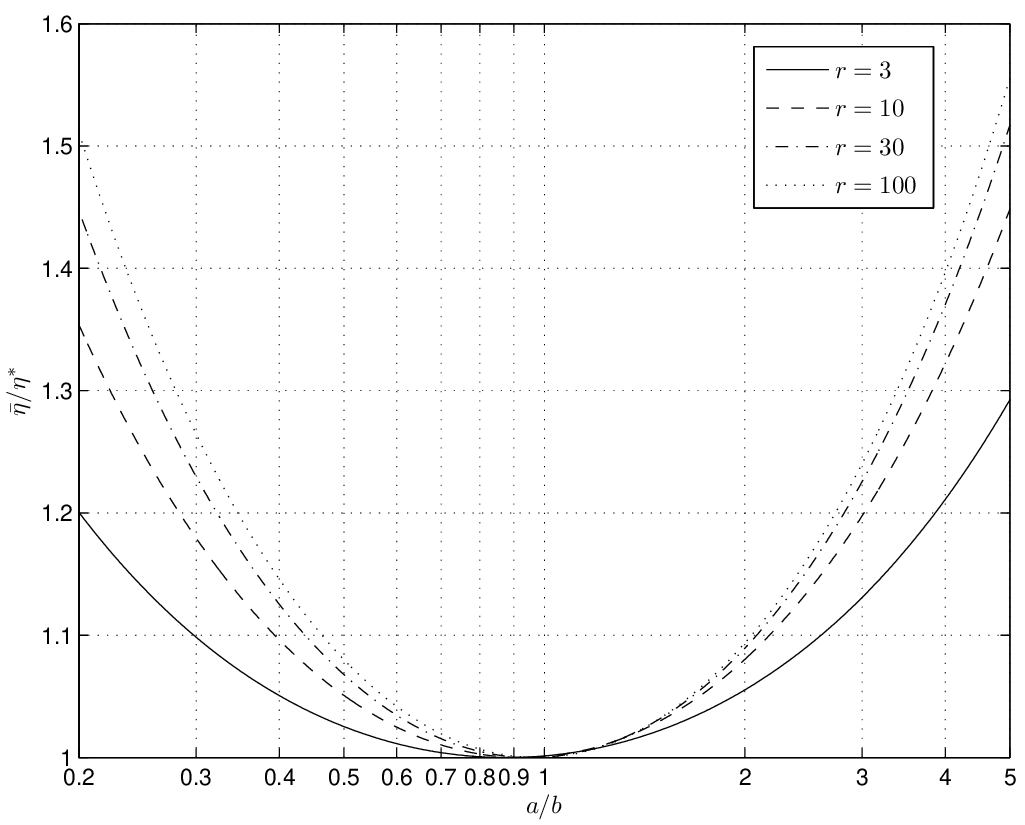}%
}%
\caption{Degradation factor $\bar\risk/\risk^*$ as a function of $a/b$ and $\nnum$}%
\label{fig: rel_opt}%
\end{figure}%

\begin{proposition}
\label{prop: a b para minimax}
For each of the loss functions \eqref{eq: loss abs error gen} and \eqref{eq: loss upper ratio gen}, there exists a unique value of the ratio $a/b$ for which the estimator considered in Theorem~\ref{teo: loss abs error gen} or \ref{teo: loss upper ratio gen} respectively is minimax, that is, minimizes $\sup_{p \in (0,1)} \risk(p)$ over all (possibly randomized) estimators. For the loss function \eqref{eq: loss abs error gen} this value is given by
\begin{equation}
\label{eq: a b minimax ll}
\frac a b = \frac{\gamma(\nnum-1,\nnum-1)}{1-\gamma(\nnum-1,\nnum-1)},
\end{equation}
and for \eqref{eq: loss upper ratio gen} it is determined by the condition
\begin{equation}
\label{eq: a b minimax il}
\frac{\gamma(\nnum-1,\Omegail)}{1-\gamma(\nnum+1,\Omegail)} = \frac{\nnum(\Omegail-\nnum+1)}{\Omegail(\nnum-\Omegail)}
\end{equation}
with $\Omegail$ as in \eqref{eq: Omega il}.
\end{proposition}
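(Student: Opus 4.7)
The plan is to reduce the minimax property of the two proposed estimators to a condition that equates two values of $\Omega$. For an arbitrary (possibly randomized) estimator, the argument recalled in Section~\ref{parte: disc} gives $\sup_{p \in (0,1)} \risk(p) \ge \limsup_{p \to 0}\risk(p) \ge \risk^*$, while Theorems~\ref{teo: loss abs error gen} and \ref{teo: loss upper ratio gen} assert $\sup_{p\in(0,1)}\risk(p) = \bar\risk = \lim_{p \to 0}\risk(p)$ for the proposed estimators. Hence each proposed estimator is minimax if and only if $\bar\risk = \risk^*$. The function $\lim_{p\to 0}\risk(p)$ is strictly convex in $\Omega$ in both cases, since the derivatives \eqref{eq: diff bar risk diff Omega ll} and \eqref{eq: diff bar risk diff Omega il} are strictly monotone increasing in $\Omega$ (the first because $\gamma(\nnum-1,\Omega)$ is strictly increasing, the second because $b\gamma(\nnum-1,\Omega)/(\nnum-1)$ increases and $-a\nnum(1-\gamma(\nnum+1,\Omega))/\Omega^2$ also increases). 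Consequently the minimizer $\Omega^*$ is unique, and the minimax condition $\bar\risk = \risk^*$ is equivalent to requiring that the $\Omega$ associated with the proposed estimator coincide with $\Omega^*$.

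For linear-linear loss, the estimator in Theorem~\ref{teo: loss abs error gen} has $\Omega = \nnum - 1$, independent of $(a,b)$. Substituting $\Omega^* = \nnum-1$ into \eqref{eq: Omega opt ll} yields $\gamma(\nnum-1,\nnum-1) = a/(a+b)$, which rearranges to \eqref{eq: a b minimax ll}. This determines $a/b$ uniquely, so the assertion for \eqref{eq: loss abs error gen} follows at once.

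For inverse-linear loss, the estimator in Theorem~\ref{teo: loss upper ratio gen} has $\Omega = \Omegail$, but now $\Omegail$ itself depends on $(a,b)$ through the defining condition \eqref{eq: cond Omega}. Solving \eqref{eq: cond Omega} for the ratio gives
\[
\frac{a}{b} \;=\; \frac{\Omegail(\Omegail-\nnum+1)}{(\nnum-1)(\nnum-\Omegail)},
\]
while imposing $\Omega^* = \Omegail$ in \eqref{eq: Omega opt il} provides a second expression,
\[
\frac{a}{b} \;=\; \frac{\Omegail^{2}\,\gamma(\nnum-1,\Omegail)}{\nnum(\nnum-1)\bigl(1-\gamma(\nnum+1,\Omegail)\bigr)}.
\]
Equating these and cancelling the common factor $\Omegail/(\nnum-1)$ produces \eqref{eq: a b minimax il}, giving existence of a minimax ratio.

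The uniqueness assertion for inverse-linear loss is the main obstacle. By \eqref{eq: Omega il}, the map $a/b \mapsto \Omegail$ is a continuous strictly increasing bijection from $(0,\infty)$ onto $(\nnum-1,\nnum)$, so it suffices to prove that \eqref{eq: a b minimax il} has a unique solution $\Omegail \in (\nnum-1,\nnum)$. At $\Omegail = \nnum-1$ the right-hand side of \eqref{eq: a b minimax il} vanishes while the left-hand side is strictly positive, and as $\Omegail \to \nnum^{-}$ the right-hand side diverges while the left-hand side stays finite; existence then follows by the intermediate value theorem. Uniqueness will come from showing that $\Omegail \mapsto \Omegail(\nnum-\Omegail)\gamma(\nnum-1,\Omegail) - \nnum(\Omegail-\nnum+1)\bigl(1-\gamma(\nnum+1,\Omegail)\bigr)$ is strictly monotone on $(\nnum-1,\nnum)$. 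Its derivative can be computed from $\tfrac{\diff}{\diff u}\gamma(t,u) = u^{t-1}\exp(-u)/\Gamma(t)$ and then simplified using the integration-by-parts identities relating $\gamma(\nnum-1,\cdot)$, $\gamma(\nnum,\cdot)$ and $\gamma(\nnum+1,\cdot)$; I expect this calculation, rather than the derivation of \eqref{eq: a b minimax il} itself, to be the technically most delicate step of the argument.
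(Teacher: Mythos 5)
Your derivation of \eqref{eq: a b minimax ll} and \eqref{eq: a b minimax il} is correct and is essentially the paper's own argument: for linear-linear loss, set $\Omega^*=\nnum-1$ in \eqref{eq: Omega opt ll}; for inverse-linear loss, solve \eqref{eq: cond Omega} for $a/b$, set $\Omega^*=\Omegail$ in \eqref{eq: Omega opt il}, and equate the two expressions (your algebra, including the cancellation of $\Omegail/(\nnum-1)$, checks out). Where you go beyond the paper is in trying to make the existence and uniqueness claims explicit for the inverse-linear case: your observation that $a/b\mapsto\Omegail$ is an increasing bijection of $(0,\infty)$ onto $(\nnum-1,\nnum)$ is correct, and your intermediate-value argument for existence of a root of \eqref{eq: a b minimax il} in that interval is sound. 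However, the uniqueness of that root --- which you yourself flag as the delicate step --- is left as an unverified monotonicity claim, so your proof of the word ``unique'' is incomplete for the inverse-linear case. To be fair, the paper's proof does not supply this either: it simply reads \eqref{eq: a b minimax il} as ``determining'' the ratio, and the uniqueness of the minimizer $\Omega^*$ for fixed $a/b$ (from the monotonicity of \eqref{eq: diff bar risk diff Omega ll} and \eqref{eq: diff bar risk diff Omega il}) settles matters only in the linear-linear case, where the estimator's $\Omega$ is fixed at $\nnum-1$ independently of $a/b$. A further point neither you nor the paper addresses head-on is the ``only if'' half of your opening equivalence: concluding that the estimator fails to be minimax when $\bar\risk>\risk^*$ tacitly assumes the minimax value equals $\risk^*$, which is inherited from the discussion in Section~\ref{parte: disc} rather than proved. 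In short, your proposal matches the paper on everything the paper actually proves, and is more candid about what remains open.
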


\subsection{Minimaxity for asymptotically large $\nnum$ in the case $a=b$}

The specific values of the ratio $a/b$ determined by Proposition~\ref{prop: a b para minimax} can be shown to tend to $1$ as $\nnum \rightarrow \infty$. Related to this, the following establishes that for $a=b$ the proposed estimators are asymptotically minimax as $\nnum \rightarrow \infty$.

\begin{proposition}
\label{prop: a b 1 asint minimax}
For the loss functions \eqref{eq: loss abs error gen} and \eqref{eq: loss upper ratio gen} with $a=b$, each of the estimators considered in Theorems~\ref{teo: loss abs error gen} and \ref{teo: loss upper ratio gen}, respectively, approaches a minimax estimator asymptotically as $\nnum \rightarrow \infty$, in the sense that $\lim_{\nnum \rightarrow \infty} \bar\risk/\risk^* = 1$.
\end{proposition}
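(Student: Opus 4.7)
The plan is to treat the two loss functions separately, in each case establishing that the asymptotic risk $\bar\risk$ of the proposed estimator and the minimax value $\risk^*$ share the same leading-order behavior as $\nnum\to\infty$. The natural framework is to use \eqref{eq: riskas nu} to rewrite the asymptotic risk as an expectation with respect to a Gamma distribution, and then to exploit the concentration of that distribution around its mean via the central limit theorem.

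For the linear-linear loss with $a=b$, a change of variable in \eqref{eq: riskas nu} gives, up to the common factor $a$, $\bar\risk(\Omega)=\E[|X-\Omega|]/(\nnum-1)$ with $X\sim\mathrm{Gamma}(\nnum-1)$. The proposed estimator corresponds to $\Omega=\nnum-1=\E[X]$, whereas by \eqref{eq: Omega opt ll} the minimax $\Omega^*$ is the median of $X$, which differs from $\E[X]$ by $O(1)$ (a classical estimate for Gamma medians). Writing $X=(\nnum-1)+\sqrt{\nnum-1}\,Z_\nnum$ and noting that $Z_\nnum$ converges in distribution to $Z\sim N(0,1)$, I would set $\alpha_\nnum=(\Omega-(\nnum-1))/\sqrt{\nnum-1}\to 0$ for either choice of $\Omega$ and show, by uniform integrability, that $\E[|Z_\nnum-\alpha_\nnum|]\to \E[|Z|]=\sqrt{2/\pi}$. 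Hence both $\bar\risk$ and $\risk^*$ are asymptotically $\sqrt{2/(\pi\nnum)}$, and their ratio tends to $1$.

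For the inverse-linear loss with $a=b$, the same procedure yields $\bar\risk(\Omega)=\E[\max\{Y/\Omega,\Omega/Y\}]-1=\E[|Y-\Omega|/\min(Y,\Omega)]$ with $Y\sim\mathrm{Gamma}(\nnum)$. The proposed $\Omegail=\sqrt{\nnum(\nnum-1)}=\nnum-\tfrac12+O(\nnum^{-1})$ lies within $O(1)$ of $\E[Y]=\nnum$. For the optimum, I would read \eqref{eq: Omega opt il} as ${\Omega^*}^2/(\nnum(\nnum-1))=(1-\gamma(\nnum+1,\Omega^*))/\gamma(\nnum-1,\Omega^*)$ and argue by contradiction (via subsequences) that any cluster value of $(\Omega^*-\nnum)/\sqrt{\nnum}$ must be zero, since otherwise the right-hand side is bounded away from $1$ while the left-hand side approaches $1$; hence $\Omega^*=\nnum+o(\sqrt{\nnum})$. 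Writing $Y=\nnum+\sqrt{\nnum}\,W_\nnum$ with $W_\nnum$ converging in distribution to $N(0,1)$ and using $\min(Y,\Omega)\sim\nnum$ on the bulk of the distribution, the expectation reduces at leading order to $\E[|Y-\Omega|]/\nnum\sim\sqrt{2/(\pi\nnum)}$ for both choices of $\Omega$, giving $\bar\risk/\risk^*\to 1$.

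The main obstacle I anticipate is promoting the Gamma-to-normal distributional convergence to the required convergence of first moments (and of the more singular quantity $|Y-\Omega|/\min(Y,\Omega)$ in the inverse-linear case), uniformly for $\Omega$ in the relevant neighborhood. This will require tail estimates for the Gamma distribution together with a truncation argument isolating the central, normal-like region from the negligible contribution near $Y=0$. Once these technicalities are resolved, the universality of the leading constant $\sqrt{2/\pi}$, independent of whether $\Omega$ is taken to be the proposed value or the minimax value, completes the proof.
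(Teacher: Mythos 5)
Your plan is correct, and it takes a genuinely different route from the paper. The paper works entirely with closed-form expressions: for the linear-linear case it evaluates $\bar\risk$ and $\risk^*$ explicitly via incomplete-gamma identities, reduces the ratio to $h_\nnum(\delta^*_\nnum)$ with $h_k(\delta)=\exp(\delta)(1+\delta/(k-1))^{-k+1}$, and concludes via a uniform-convergence (Dini) argument combined with the known $O(1)$ localization of the Gamma median (Alm's result, giving $\Omega^*_\nnum-\nnum\rightarrow -4/3$); for the inverse-linear case it decomposes the risk into three terms, kills two of them by Stirling and the median lemma, and applies the same $h_k$ device to the dominant term, using the bound $\Omega^*\in(\nnum-1,\nnum)$. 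Your approach instead identifies the probabilistic content directly: both $\bar\risk$ and $\risk^*$ equal $a\E[|X-\Omega|]/(\nnum-1)$ (resp.\ $a\E[|Y-\Omega|/\min(Y,\Omega)]$) for Gamma variables, the competing values of $\Omega$ differ from the Gamma mean by $o(\sqrt{\nnum})$, and the CLT plus uniform integrability forces the universal leading term $a\sqrt{2/(\pi\nnum)}$ in all cases, whence the ratio tends to $1$. What your route buys is conceptual transparency (it explains \emph{why} the result holds and identifies the common limit constant $\sqrt{2/\pi}$, and it would extend to other losses with the same scaling); what the paper's route buys is explicit finite-$\nnum$ expressions, which it needs anyway for the numerical degradation-factor claims, and it avoids the moment-convergence technicalities. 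Two points you should make sure to carry out fully: (i) the passage from convergence in distribution to convergence of $\E[|Z_\nnum-\alpha_\nnum|]$ is fine via bounded second moments, but the inverse-linear case has the singular factor $1/\min(Y,\Omega)$, so the truncation near $Y=0$ (using, e.g., $\E[1/Y^2]<\infty$ together with exponentially small lower-tail probabilities) must be written out; and (ii) your subsequence argument that any finite cluster value of $(\Omega^*-\nnum)/\sqrt{\nnum}$ is zero should also dispose of the cases where this quantity diverges to $\pm\infty$ along a subsequence (there the two sides of \eqref{eq: Omega opt il} are incompatible for the cruder reason that one is bounded and the other is not). Neither point is a gap in the idea, only in the execution.
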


As a consequence of this result, for $a=b$ and large $\nnum$ the considered estimators are approximately optimum in the minimax sense. This is illustrated in Figure~\ref{fig: rel_opt_N}, which shows the degradation factor $\bar\risk/\risk^*$ as a function of $\nnum$. In fact, $\bar\risk/\risk^*$ is seen to be very low even for small $\nnum$, and in particular for the range of values of $\nnum$ that are commonly used in practice. Thus, for example, the mean absolute error (loss function \eqref{eq: loss abs error gen} with $a=b=1$) that is guaranteed not to be exceeded according to Theorem~\ref{teo: loss abs error gen} is within $1\%$ of the minimax mean absolute error for $7 \leq \nnum \leq 1000$. Similarly, defining risk as mean symmetric ratio minus $1$ (loss function \eqref{eq: loss upper ratio gen} with $a=b=1$), the risk that is guaranteed not to be exceeded as per Theorem~\ref{teo: loss upper ratio gen} is within $0.1\%$ of the minimax risk for the same range of values of $\nnum$.

\begin{figure}%
\centering%
\subfigure[Linear-linear loss \eqref{eq: loss abs error gen}]{%
\label{fig: rel_opt_N_ll}%
\includegraphics[width = .499\textwidth]{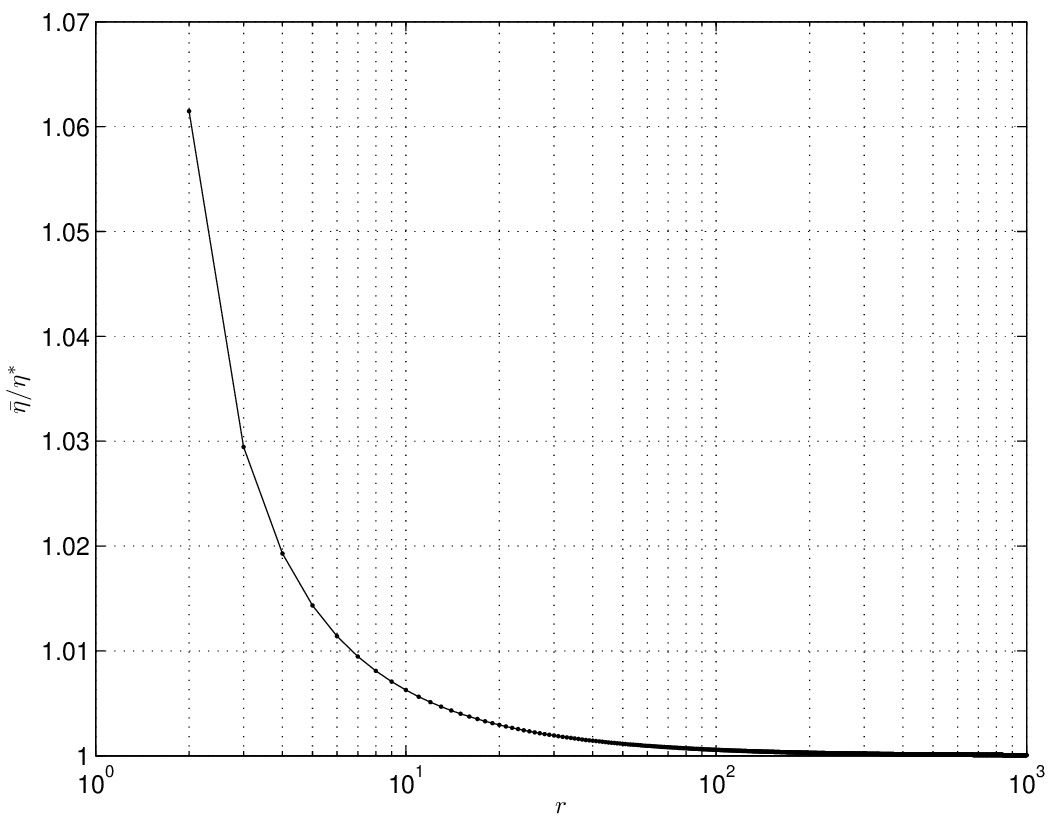}%
}%
\subfigure[Inverse-linear loss \eqref{eq: loss upper ratio gen}]{%
\label{fig: rel_opt_N_il}%
\includegraphics[width = .511\textwidth]{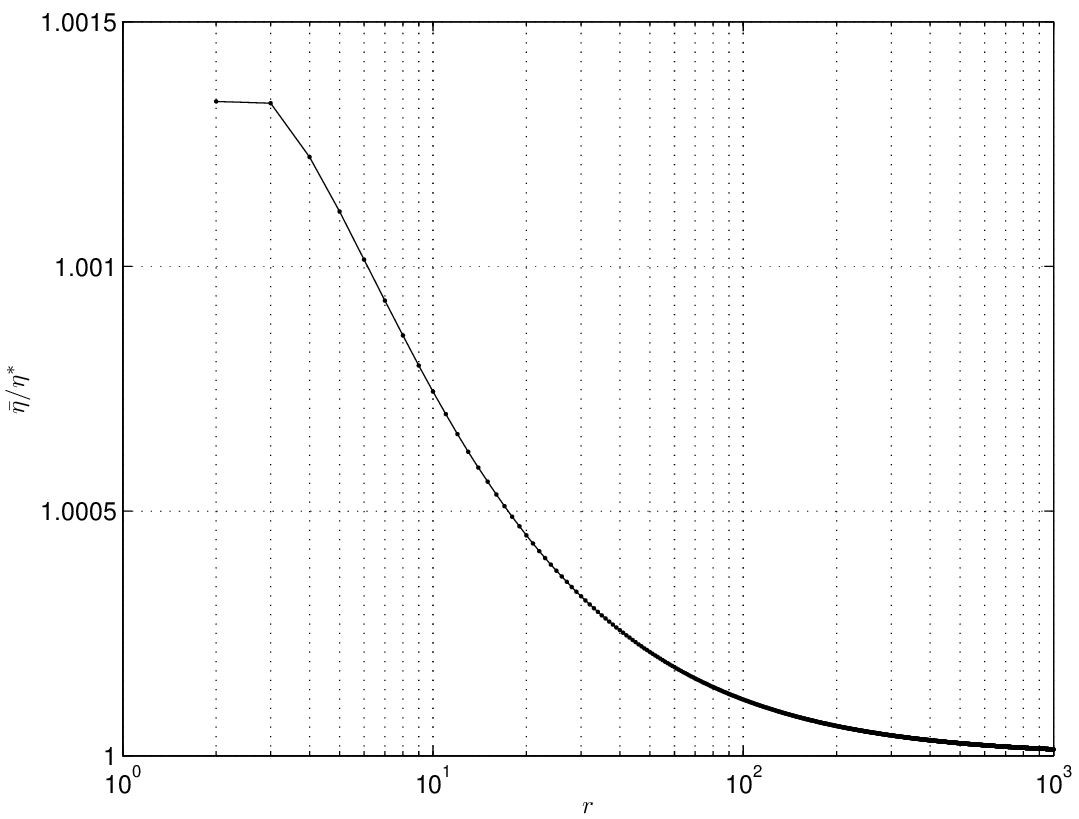}%
}%
\caption{Degradation factor $\bar\risk/\risk^*$ for $a=b$ as a function of $\nnum$}%
\label{fig: rel_opt_N}%
\end{figure}%

\section{Proofs}
\label{parte: proofs}

For $\rho \in \mathbb N$, $\rho \geq 2$;
$\mu \geq \rho-1$; and $t \in (0,1)$, let $Y_\rho(\mu,t)$ be defined as
\begin{equation}
\label{eq: Y nnum gen}
Y_\rho(\mu,t) =
\frac{ \ff{(\mu-1)}{\rho-1} t^{\rho-1} (1-t)^{\mu-\rho+1} }{(\rho-1)!}.
\end{equation}

\begin{proof}[Proof of Theorem~\ref{teo: loss abs error gen}]
The result immediately stems from \eqref{eq: abs error gen risk 2} and the analysis in \citet{Mendo09a}.
\end{proof}

\begin{lemma}
\label{lem: suma int}
The following inequality holds for $r \geq 2$, $d \in [-1, 0]$, $j \geq 2$.
\begin{equation}
\label{eq: suma int}
\sum_{i=1}^{\nnum-1} (i+d+1)^j > \frac{(\nnum+d)^{j+1}}{j+1}.
\end{equation}
\end{lemma}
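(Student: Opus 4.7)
My plan is to argue by induction on $\nnum$, keeping $d \in [-1,0]$ and $j \geq 2$ fixed. For the base case $\nnum = 2$, the sum collapses to the single term $(d+2)^j$, and the inequality reduces to $(j+1)(d+2)^j > (d+2)^{j+1}$. Since $d+2 \geq 1 > 0$, this is equivalent to $j+1 > d+2$, which holds because $j \geq 2$ and $d \leq 0$.

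For the inductive step, I would assume the claim for $\nnum$ and add $(\nnum+d+1)^j$ to both sides. Using the inductive hypothesis on the sum and clearing the common factor $1/(j+1)$, it suffices to verify
\begin{equation*}
(\nnum+d+1)^{j+1} - (\nnum+d)^{j+1} \leq (j+1)(\nnum+d+1)^j.
\end{equation*}
The mean value theorem applied to $x \mapsto x^{j+1}$ on $[\nnum+d,\nnum+d+1]$ writes the left-hand side as $(j+1)\xi^j$ for some $\xi \in (\nnum+d,\nnum+d+1)$. Because $\nnum+d \geq 1 > 0$, the map $x \mapsto x^j$ is strictly increasing on this interval, so $\xi < \nnum+d+1$ gives $(j+1)\xi^j < (j+1)(\nnum+d+1)^j$, which is more than enough.

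I expect the main pitfall to be the tempting Riemann-sum comparison $(i+d+1)^j > \int_{i+d}^{i+d+1} x^j\,\diff x$; telescoping it yields only $\sum_{i=1}^{\nnum-1}(i+d+1)^j > \bigl((\nnum+d)^{j+1} - (d+1)^{j+1}\bigr)/(j+1)$, which matches the stated bound in the boundary case $d=-1$ but misses it by $(d+1)^{j+1}/(j+1)$ otherwise. The induction succeeds because the MVT estimate at each step is exactly the per-step overshoot $(\nnum+d+1)^j - \int_{\nnum+d}^{\nnum+d+1} x^j\,\diff x$, accumulated one term at a time, and this is precisely the slack needed to close the gap. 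The assumption $j \geq 2$ is only used in the base case; the inductive step remains valid for any $j>0$.
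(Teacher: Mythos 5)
Your proof is correct, and it takes a genuinely different route from the paper's. The paper argues in one shot: it interprets the sum as an area of rectangles, bounds it below by $\nnum-2$ unit-width trapezoids plus two half-width rectangles, uses convexity of the curve through the upper vertices to compare the trapezoids with $\int_{1/2}^{\nnum-3/2}(x+d+3/2)^j\,\diff x$, and then must check by hand that the leftover boundary terms are nonnegative --- which forces a case split between $j\geq 3$ and $j=2$, the latter needing a small calculus argument in $d$. Your induction on $\nnum$ replaces all of this with a single mean value theorem estimate, $(\nnum+d+1)^{j+1}-(\nnum+d)^{j+1}=(j+1)\xi^j<(j+1)(\nnum+d+1)^j$ since $\xi<\nnum+d+1$ and $\nnum+d\geq 1>0$, valid uniformly in $j>0$; it also isolates exactly where $j\geq2$ is needed, namely the base case $\nnum=2$ (where $j=1$, $d=0$ gives equality, so the hypothesis is not superfluous). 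The strictness bookkeeping is in order: the base case is strict because $j+1\geq 3>d+2$, and the inductive step only needs a non-strict inequality on the added term, while the MVT in fact gives a strict one. Your side remark that the naive right-endpoint Riemann comparison only yields $\bigl((\nnum+d)^{j+1}-(d+1)^{j+1}\bigr)/(j+1)$, falling short by $(d+1)^{j+1}/(j+1)$ unless $d=-1$, is accurate and explains why the paper resorts to the trapezoid refinement. Overall your argument is shorter, more elementary, and avoids the case analysis; the paper's buys a picture-level explanation but at the cost of the $j=2$ special case.
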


\begin{proof}
The sum in \eqref{eq: suma int} can be expressed as the area covered by the $\nnum-1$ rectangles of width $1$ and height $(i+d+1)^j$, $i=1,\ldots,\nnum-1$ in Figure~\ref{fig: fig-dem}, or equivalently as the shaded area comprised by $\nnum-2$ unit-width trapezoids plus two half-width rectangles. Since the curve $(x+d+3/2)^j$ touches the upper vertices of the trapezoids and is convex, the following inequality can be written:
\begin{equation}
\label{eq: suma int larga}
\begin{split}
\sum_{i=1}^{\nnum-1} (i+d+1)^j & \geq \int_{1/2}^{\nnum-3/2} \left (x+d+ \frac 3 2 \right)^j \diff x + \frac{(d+2)^j}{2} + \frac{(\nnum+d)^j}{2} \\
& = \frac{(\nnum+d)^{j+1}}{j+1} + \frac{(r+d)^j}{2} + (d+2)^j \left( \frac 1 2 - \frac{d+2}{j+1} \right).
\end{split}
\end{equation}
For $j \geq 3$ the term $1/2 - (d+2)/(j+1)$ in~\eqref{eq: suma int larga} is nonnegative, which ensures that \eqref{eq: suma int} holds. For $j=2$, \eqref{eq: suma int larga} reduces to
\begin{equation}
\label{eq: suma int j 2}
\sum_{i=1}^{\nnum-1} (i+d+1)^j \geq \frac{(\nnum+d)^{3}}{3} + \frac{-2d^3-6d^2+6(\nnum-2)d+3\nnum^2-4}{6}.
\end{equation}
The second summand in \eqref{eq: suma int j 2} has a derivative with respect to $d$ equal to $-d^2-2d+\nnum-2$, which is nonnegative for $d \in [-1,0]$, $\nnum \geq 2$. Thus this summand is lower bounded by its value at $d=-1$, i.e.~$(3\nnum^2-6\nnum+4)/6$, which is positive. Therefore \eqref{eq: suma int} also holds for $j=2$.
\begin{figure}%
\centering%
\includegraphics[width = .65\textwidth, clip]{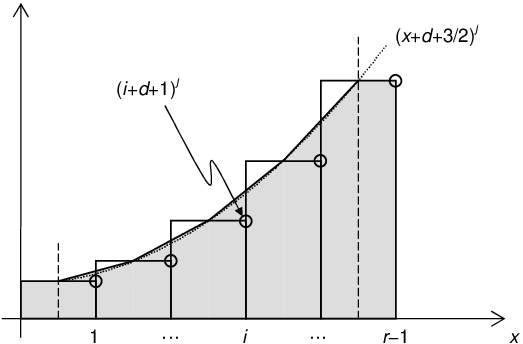}%
\caption{Illustration of \eqref{eq: suma int larga}}%
\label{fig: fig-dem}%
\end{figure}%
\end{proof}

\begin{lemma}
\label{lem: y nnum Taylor}
Given $\rho$, $\mu$, $\Omega$ and $\delta$ such that (i) $\rho \in \mathbb N, \rho \geq 2$; (ii) $\Omega \in [\rho-1,\rho]$; (iii) $\delta \in [-1,0]$; (iv) $\mu > \rho-1$; and (v) $\mu > \Omega-\delta-1$, the following hold:
\begin{enumerate}
\item
\label{lem: y nnum Taylor a}
$Y_\rho(\mu,\Omega/(\mu+\delta+1))$ is a strictly increasing function of $\mu$, with
\begin{equation}
\label{eq: lim y nnum a}
\lim_{\mu \rightarrow \infty} Y_\rho(\mu,\Omega/(\mu+\delta+1)) = {\Omega^{\rho-1} \exp(-\Omega)} / {(\rho-1)!}.
\end{equation}
\item
\label{lem: y nnum Taylor b}
$Y_{\rho+1}(\mu+1,\Omega/(\mu+\delta+1))$ is a strictly increasing function of $\mu$, with
\begin{equation}
\label{eq: lim y nnum b}
\lim_{\mu \rightarrow \infty} Y_{\rho+1}(\mu+1,\Omega/(\mu+\delta+1)) = {\Omega^\rho \exp(-\Omega)} / {\rho!}.
\end{equation}
\end{enumerate}
\end{lemma}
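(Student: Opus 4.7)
My plan is to treat $\mu$ as a continuous variable so that monotonicity can be obtained via a derivative test, and to deduce part (b) from part (a) by an algebraic identity relating $Y_{\rho+1}(\mu+1,\cdot)$ to $Y_\rho(\mu,\cdot)$. For part (a), I first factor
\begin{equation*}
Y_\rho(\mu, \Omega/(\mu+\delta+1)) = \frac{\Omega^{\rho-1}}{(\rho-1)!} \cdot \frac{\ff{(\mu-1)}{\rho-1}}{(\mu+\delta+1)^{\rho-1}} \cdot \left(1-\frac{\Omega}{\mu+\delta+1}\right)^{\mu-\rho+1}.
\end{equation*}
The limit as $\mu \to \infty$ is then immediate: the middle factor tends to $1$ since each of its $\rho-1$ linear factors in the numerator is asymptotic to $\mu+\delta+1$, and the last factor tends to $\exp(-\Omega)$ by the standard limit $(1-x/s)^s \to \exp(-x)$ with $s=\mu+\delta+1$ (the exponent mismatch contributes a bounded factor tending to $1$). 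This yields the claimed $\Omega^{\rho-1}\exp(-\Omega)/(\rho-1)!$.

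For strict monotonicity in (a), I set $h(\mu)=\log Y_\rho(\mu,\Omega/(\mu+\delta+1))$ and compute
\begin{equation*}
h'(\mu) = \sum_{j=1}^{\rho-1}\frac{1}{\mu-j} - \frac{\rho-1}{\mu+\delta+1} + \log\!\left(1-\frac{\Omega}{\mu+\delta+1}\right) + \frac{\Omega(\mu-\rho+1)}{(\mu+\delta+1)(\mu+\delta+1-\Omega)}.
\end{equation*}
I aim to show $h'(\mu)>0$ by expanding the logarithm as $\log(1-x)=-\sum_{k\geq 1}x^k/k$ with $x=\Omega/(\mu+\delta+1)$, then grouping terms: the leading $-x$ is cancelled (at leading order in $1/\mu$) against the combination of $\sum_j 1/(\mu-j)$, $-(\rho-1)/(\mu+\delta+1)$ and the first-order piece of the rational term, while the quadratic-and-higher tail of the logarithm is dominated by the remainder of the rational term. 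The bounds $\Omega\in[\rho-1,\rho]$, $\delta\in[-\rho,0]$ and $\mu>\max\{\rho-1,\Omega-\delta-1\}$ should supply exactly the inequalities required to close this argument.

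Part (b) reduces to (a) via the algebraic identity $Y_{\rho+1}(\mu+1,t)=(\mu t/\rho)\,Y_\rho(\mu,t)$, which at $t=\Omega/(\mu+\delta+1)$ becomes
\begin{equation*}
Y_{\rho+1}(\mu+1,\Omega/(\mu+\delta+1)) = \frac{\mu\Omega}{\rho(\mu+\delta+1)}\,Y_\rho(\mu,\Omega/(\mu+\delta+1)).
\end{equation*}
The limit $\Omega^\rho\exp(-\Omega)/\rho!$ follows at once from part (a) since $\mu/(\mu+\delta+1)\to 1$. For monotonicity, when $\delta\geq -1$ the extra factor $\mu/(\mu+\delta+1)$ is non-decreasing and the claim is inherited from (a); when $\delta\in[-\rho,-1)$ I re-run the logarithmic derivative argument directly on $Y_{\rho+1}(\mu+1,\cdot)$, which structurally coincides with part (a) applied to the shifted parameters $(\rho+1,\mu+1,\delta-1)$ — the admissible range $\delta\in[-\rho,0]$ is stated precisely so that the shifted $\delta-1$ remains admissible. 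The main obstacle throughout is establishing the positivity of $h'(\mu)$ uniformly over the allowed parameter range: balancing the harmonic-like sum against the Taylor tail of the logarithm and the rational term requires careful bookkeeping and makes crucial use of all three parameter constraints.
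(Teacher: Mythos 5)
Your limit computations are fine in both parts, and the identity $Y_{\rho+1}(\mu+1,t)=(\mu t/\rho)\,Y_\rho(\mu,t)$ is correct and does give the limit in (b) immediately from (a). The problem is that the entire content of the lemma is the strict monotonicity, and for that you have described a strategy rather than given a proof. The paper's argument is essentially the one you sketch --- after the substitution $t=\Omega/(\mu+\delta+1)$ it writes $\log Y_\rho(\Omega/t-\delta-1,t)=-\sum_{j\ge 0}c_jt^j$ and shows $c_j\ge 0$ for all $j\ge 1$ with strict inequality for $j\ge 2$ --- but establishing those sign conditions is where all the work lies: for $j\ge 2$ it requires a separate combinatorial estimate, $\sum_{i=1}^{\rho-1}(i+\delta+1)^j>(\rho+\delta)^{j+1}/(j+1)$ (the paper's Lemma~1, proved by a convexity/trapezoid comparison), and the first-order coefficient $c_1=\bigl(\Omega^2-2\rho\Omega+(\rho-1)(\rho+2)\bigr)/(2\Omega)+\delta(\rho-\Omega-1)/\Omega$ is only \emph{barely} nonnegative: its first summand equals $\bigl((\Omega-\rho)^2+\rho-2\bigr)/(2\Omega)$, which vanishes at $\rho=2$, $\Omega=\rho$, and the second vanishes at $\delta=0$. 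Since the inequality is tight on the boundary of the parameter set, your proposed ``leading-order cancellation plus domination of the tail'' cannot close the argument; you need exact sign control of the first coefficient using $\Omega\in[\rho-1,\rho]$ and $\rho\ge 2$, and a genuine lower bound on the power sums for the higher ones. As written, this is a gap, and you yourself flag it as the main obstacle.

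There is a second, concrete flaw in part (b). For $\delta\in[-\rho,-1)$ you propose to invoke part (a) with the shifted parameters $(\rho+1,\mu+1,\delta-1)$. But hypothesis (ii) for that shifted instance reads $\Omega\in[\rho,\rho+1]$, while you only know $\Omega\in[\rho-1,\rho]$; the two intervals meet only at $\Omega=\rho$, so the reduction fails except in that degenerate case. (This matters: in the proof of Theorem~2 part (b) is used with $\delta=d-1$, $d\le 0$, i.e.\ precisely in the regime $\delta\le -1$ where your product-of-monotone-factors argument also breaks down, because $\mu/(\mu+\delta+1)$ is then decreasing.) The paper avoids this by not re-invoking part (a): it recomputes the Taylor coefficients for (b), observes they differ from the $c_j$ only by the single extra term $(\delta+1)^j/(j\Omega^j)$ coming from extending the power sum to $i=0$, and argues their nonnegativity directly. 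You would need to do something of that kind rather than appeal to the lemma with parameters that violate its hypotheses.
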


\begin{proof}
According to hypotheses (iv) and (v), it holds that $\mu > \rho-1$ and $\Omega/(\mu+\delta+1) < 1$, and thus $Y_\rho(\mu,\Omega/(\mu+\delta+1))$ and $Y_{\rho+1}(\mu+1,\Omega/(\mu+\delta+1))$ are well defined from \eqref{eq: Y nnum gen}.

The proof will be carried out separately for parts \ref{lem: y nnum Taylor a} and \ref{lem: y nnum Taylor b} of the Lemma.

\ref{lem: y nnum Taylor a}
It is convenient to make the change of variable $t=\Omega/(\mu+\delta+1)$, by which $Y_\rho(\mu,\Omega/(\mu+\delta+1))$ is expressed as $Y_\rho(\Omega/t-\delta-1,t)$. It will be shown that
\begin{equation}
\label{eq: lim y nnum a cambio de var}
\lim_{t \rightarrow 0} Y_\rho ( \Omega/t-\delta-1, t ) =  {\Omega^{\rho-1} \exp(-\Omega)} / {(\rho-1)!},
\end{equation}
which is equivalent to \eqref{eq: lim y nnum a}; and that $Y_\rho(\Omega/t-\delta-1,t)$ is a strictly decreasing function of $t$, which will imply that $Y_\rho(\mu,\Omega/(\mu+\delta+1))$ strictly increases with $\mu$. From \eqref{eq: Y nnum gen},
\begin{equation}
\label{eq: log Y nnum gen}
\log Y_\rho(\mu,t) = \sum_{i=1}^{\rho-1} \log \frac{(\mu-i)t}{\Omega} + \sum_{i=1}^{\rho-1} \log \frac{\Omega}{i} +    (\mu-\rho+1)\log(1-t),
\end{equation}
and thus
\begin{equation}
\label{eq: log Y nnum gen sust}
\begin{split}
\log Y_\rho \left( \frac{\Omega}{t}-\delta-1,t \right)
& = \sum_{i=1}^{\rho-1} \log \left( 1 - \frac{(i+\delta+1)t}{\Omega} \right) \\
& \quad + \sum_{i=1}^{\rho-1} \log \frac{\Omega}{i} + \left( \frac{\Omega}{t}-\rho-\delta \right)\log(1-t).
\end{split}
\end{equation}
Taking into account that $(\rho+\delta)t/\Omega = (\rho+\delta)/(\mu+\delta+1) < 1$ as a result of (iv), and that $t<1$, the Taylor expansion $\log(1-t) = -\sum_{j=1}^\infty t^j/j$, $|t|<1$ can be used in \eqref{eq: log Y nnum gen sust} to yield
\begin{align}
\label{eq: serie c j}
\log Y_\rho \left( \frac{\Omega}{t}-\delta-1,t \right) & = -\sum_{j=0}^\infty c_{j} t^j, \\
\label{eq: c 0}
c_{0} & = \Omega + \sum_{i=1}^{\rho-1} \log \frac i \Omega, \\
\label{eq: c j}
c_{j} & = \frac{\Omega}{j+1} - \frac{\rho+\delta}{j} + \frac{1}{j\Omega^j} \sum_{i=1}^{\rho-1} (i+\delta+1)^j \quad \text{for } j \geq 1.
\end{align}
The equalities \eqref{eq: serie c j} and \eqref{eq: c 0} imply \eqref{eq: lim y nnum a cambio de var}, and thus \eqref{eq: lim y nnum a}.

To prove that $Y_\rho(\Omega/t-\delta-1,t)$ strictly decreases with $t$, it suffices to show that the coefficients $c_{j}$ satisfy $c_{j} \geq 0$, $j \geq 1$, with strict inequality for some $j$. For $j \geq 2$, \eqref{eq: c j} and Lemma~\ref{lem: suma int} yield
\begin{equation}
\begin{split}
\label{eq: c j 2}
j(j+1) c_{j} & > j\Omega -(j+1)(\rho+\delta) + (\rho+\delta) \left(\frac{\rho+\delta}{\Omega}\right)^j \\
& = -j(\rho+\delta-\Omega) + (\rho+\delta) \left( \left(\frac{\rho+\delta}{\Omega}\right)^j -1 \right).
\end{split}
\end{equation}
Taking into account that $\rho+\delta \geq 0$ by hypothesis (iii), and using the inequality
\begin{equation}
\left(\frac{\rho+\delta}{\Omega}\right)^j = \left(1 + \frac{\rho+\delta-\Omega}{\Omega}\right)^j \geq 1+ \frac{j(\rho+\delta-\Omega)}{\Omega},
\end{equation}
it follows from \eqref{eq: c j 2} that
\begin{equation}
\label{eq: c j 3}
c_{j} > \frac{(\rho+\delta-\Omega)^2}{(j+1) \Omega} \geq 0.
\end{equation}
For $j=1$, \eqref{eq: c j} gives
\begin{equation}
\label{eq: c 1}
c_{1}
=  \frac{\Omega^2 - 2\rho\Omega + (\rho-1)(\rho+2)}{2\Omega} + \frac{\delta(\rho-\Omega-1)}{\Omega}.
\end{equation}
Consider the first summand in \eqref{eq: c 1}. The minimum of its numerator with respect to $\Omega$ is attained at $\Omega = \rho$ and equals $\rho-2$. Thus, according to hypothesis (i), this summand is nonnegative. By  (ii) and (iii), the second summand is also nonnegative; and therefore $c_{1} \geq 0$. Consequently $Y_\rho(\Omega/t-\delta-1,t)$ strictly decreases with $t$, and thus $Y_\rho(\mu,\Omega/(\mu+\delta+1))$ strictly increases with $\mu$.

\ref{lem: y nnum Taylor b}
Making the same change of variable as in part \ref{lem: y nnum Taylor a}, and taking into account \eqref{eq: serie c j}--\eqref{eq: c j},
\begin{equation}
\label{eq: serie c' j}
\log Y_{\rho+1} \left( \mu+1, \frac{\Omega}{\mu+\delta+1} \right) = \log Y_{\rho+1} \left( \frac{\Omega}{t}-\delta,t \right) = -\sum_{j=0}^\infty c'_j t^j,
\end{equation}
\begin{align}
\label{eq: c' 0}
c'_{0} & = \Omega + \sum_{i=1}^\rho \log \frac i \Omega, \\
\label{eq: c' j}
c'_{j} &
= \frac{\Omega}{j+1} - \frac{\rho+\delta}{j} + \frac{1}{j\Omega^j} \sum_{i=0}^{\rho-1} (i+\delta+1)^j \geq c_j
\quad \text{for } j \geq 1.
\end{align}
From \eqref{eq: c' j} it follows that $c'_1 \geq 0$ and $c'_j > 0$ for $j \geq 2$. Together with \eqref{eq: serie c' j} and \eqref{eq: c' 0}, this establishes part \ref{lem: y nnum Taylor b} of the Lemma.
\end{proof}

\begin{proof}[Proof of Theorem~\ref{teo: loss upper ratio gen}]
As the case $a=0$, $b>0$ is already covered by Theorem~\ref{teo: loss abs error gen}, it will be assumed that $a>0$. This implies, according to Proposition~\ref{prop: Omega invlin cota}, that $\Omegail > \nnum-1$.

The equality \eqref{eq: teo: loss upper ratio gen, riskas} is obtained substituting the loss function \eqref{eq: loss upper ratio gen} into \eqref{eq: riskas nu} with $\Omega = \Omegail$, and making use of \eqref{eq: gamma vecinas} and \eqref{eq: cond Omega}.

The inequality \eqref{eq: risk 2} can be expressed as
\begin{align}
\label{eq: il risk  1}
\risk(p) & \leq \frac{b\Omegail Y_\nnum(\ndcero,p)}{\nnum-1}  + \frac{a\nnum Y_{\nnum+1}(\ndcero+1,p)}{\Omegail} + a\left(\frac{\nnum}{\Omegail} - 1 \right), \\
\label{eq: ndcero Omega il}
\ndcero & = \lfloor \Omegail/p \rfloor.
\end{align}
From \eqref{eq: ndcero Omega il} it stems that $\ndcero \geq \nnum-1$. Each value of $\ndcero$ has an associated interval $I_\ndcero \subseteq (0,1)$ such that \eqref{eq: ndcero Omega il} holds if and only if $p \in I_\ndcero$. Namely, $I_\ndcero = (\pinf, \psup]$ with $\pinf = \Omegail/(\ndcero+1)$, $\psup = \Omegail/\ndcero$, except if $\ndcero = \nnum-1$, in which case $\Omegail<\nnum$ and thus $\pinf<1$, $\psup>1$; or if $\ndcero = \nnum$ and $\Omegail=\nnum$, which gives $\pinf<1$, $\psup=1$; in either case $I_\ndcero = (\pinf,1)$. According to \eqref{eq: il risk  1}, and taking into account \eqref{eq: cond Omega}, to establish \eqref{eq: teo: loss upper ratio gen} it suffices to show that, for $p \in (0,1)$ and $\ndcero$ given by \eqref{eq: ndcero Omega il},
\begin{align}
\label{eq: Y1 2}
Y_\nnum(\ndcero,p) & < \Omegail^{\nnum-1} \exp(-\Omegail) / (\nnum-1)!, \\
\label{eq: Y2 2}
Y_{\nnum+1}(\ndcero+1,p) & < \Omegail^\nnum \exp(-\Omegail) / \nnum!.
\end{align}
If $\ndcero = \nnum-1$ the left-hand sides of \eqref{eq: Y1 2} and \eqref{eq: Y2 2} are zero, and the inequalities are clearly satisfied. Thus in the following it will be assumed that $\ndcero \geq \nnum$.

As a step in the proof of \eqref{eq: Y1 2} and \eqref{eq: Y2 2}, it will be shown that for $p \in (0,1)$ and $\ndcero \geq \nnum$ related by \eqref{eq: ndcero Omega il}, or equivalently for $\ndcero \geq \nnum$ and $p \in I_\ndcero$, the following inequalities hold:
\begin{align}
\label{eq: Y 1 max}
Y_\nnum(\ndcero,p) & \leq \begin{cases}
Y_\nnum(\ndcero,(\nnum-1)/\ndcero) & \text{if } (\Omegail-\nnum+1) \ndcero \leq \nnum-1, \\
Y_\nnum(\ndcero,\Omegail/(\ndcero+1)) & \text{if } (\Omegail-\nnum+1) \ndcero > \nnum-1.
\end{cases}\\
\label{eq: Y 2 max}
Y_{\nnum+1}(\ndcero+1,p) & \leq \begin{cases}
Y_{\nnum+1}(\ndcero+1,\nnum/(\ndcero+1)) & \text{if } (\nnum-\Omegail) \ndcero \leq \Omegail, \\
Y_{\nnum+1}(\ndcero+1,\Omegail/\ndcero) & \text{if } (\nnum-\Omegail) \ndcero > \Omegail.
\end{cases}
\end{align}
For $\ndcero \geq \nnum$, it follows from \eqref{eq: Y nnum gen} that $Y_\nnum(\ndcero,p)$ considered as a function of $p \in (0,1)$ is maximum at $\pmax = (\nnum-1)/\ndcero < 1$, monotone increasing for $p < \pmax$, and monotone decreasing for $p > \pmax$. As $\Omegail > \nnum-1$, it is seen that $\pmax < \psup \leq 1$, and that $\pmax < \pinf$ if and only if $(\Omegail-\nnum+1) \ndcero > \nnum-1$. This implies that $Y_\nnum(\ndcero,p)$ is bounded as given by \eqref{eq: Y 1 max}. Regarding \eqref{eq: Y 2 max}, the maximum of $Y_{\nnum+1}(\ndcero+1,p)$ with respect to $p \in (0,1)$ is attained at $\pmax'=\nnum/(\ndcero+1)<1$. As $\ndcero \geq \nnum$, it stems that $\pinf \leq \pmax' < 1$, and that $\psup < \pmax'$ if and only if $(\nnum-\Omegail) \ndcero > \Omegail$. This establishes \eqref{eq: Y 2 max}.

The proof of \eqref{eq: Y1 2} will be based on \eqref{eq: Y 1 max}. Since $\Omegail > \nnum-1$, the following definition can be made: $\ndcerocuno = (\nnum-1)/(\Omegail-\nnum+1)$. The fact that $\Omegail \leq \nnum$ implies that $\ndcerocuno \geq \nnum-1$. The upper condition in \eqref{eq: Y 1 max} is equivalent to $\ndcero \leq \ndcerocuno$, whereas the lower corresponds to $\ndcero > \ndcerocuno$. As $\ndcero$ cannot be smaller than $\nnum$, the condition $\ndcero \leq \ndcerocuno$ can only be met for some $\ndcero$ if $\ndcerocuno \geq \nnum$, i.e.~if $\Omegail \leq \nnum-1/\nnum$. On the other hand, the condition $\ndcero > \ndcerocuno$ can always be satisfied by taking $\ndcero$ sufficiently large. Thus, \eqref{eq: Y1 2} will be established in two steps. First, it will be shown that $Y_\nnum(\ndcero,\Omegail/(\ndcero+1))$ monotonically increases with $\ndcero > \ndcerocuno$ and tends to $\Omegail^{\nnum-1} \exp(-\Omegail) / (\nnum-1)!$ as $\ndcero \rightarrow \infty$. This will prove that \eqref{eq: Y1 2} holds for all $\ndcero > \ndcerocuno$. Second, it will be shown, for $\ndcerocuno \geq \nnum$, that $Y_\nnum(\ndcero,(\nnum-1)/\ndcero)$ monotonically increases with $\ndcero \geq \nnum$ and is smaller than $\Omegail^{\nnum-1} \exp(-\Omegail) / (\nnum-1)!$ for $\ndcero = \ndcerocuno$. This will establish \eqref{eq: Y1 2} for all $m$ such that $\nnum \leq \ndcero \leq \ndcerocuno$.

Regarding the first case, $\ndcero > \ndcerocuno$, consider Lemma~\ref{lem: y nnum Taylor}\ref{lem: y nnum Taylor a} with values $\nnum$, $\ndcero$, $\Omegail$, $0$ respectively for $\rho$, $\mu$, $\Omega$, $\delta$. These values satisfy the hypotheses of the Lemma (it is obvious that (i)--(iii) hold; (iv) and (v) are satisfied as well because $\ndcero>\ndcerocuno \geq \nnum-1 \geq \Omegail-1$). According to this, $Y_\nnum(\ndcero,\Omegail/(\ndcero+1))$ monotonically increases with $\ndcero$ and tends to $\Omegail^{\nnum-1} \exp(-\Omegail) / (\nnum-1)!$ as $\ndcero \rightarrow \infty$. Therefore \eqref{eq: Y1 2} holds for $\ndcero > \ndcerocuno$.

For the case $\nnum \leq \ndcero \leq \ndcerocuno$, $\ndcerocuno \geq \nnum$, using Lemma~\ref{lem: y nnum Taylor}\ref{lem: y nnum Taylor a} (with values $\nnum$, $\ndcero$, $\nnum-1$, $-1$ respectively for $\rho$, $\mu$, $\Omega$, $\delta$; (iv) and (v) hold because $\ndcero \geq \nnum$) it is seen that $Y_\nnum(\ndcero,(\nnum-1)/\ndcero)$ increases with $\ndcero$. The definition of $\ndcerocuno$ implies that $(\nnum-1)/\ndcerocuno = \Omegail/(\ndcerocuno+1)$, and thus
\begin{equation}
\label{eq: Y1 ndcero menor}
Y_\nnum( \ndcerocuno, (\nnum-1)/\ndcerocuno ) = Y_\nnum( \ndcerocuno, \Omegail/(\ndcerocuno+1) ).
\end{equation}
Applying Lemma~\ref{lem: y nnum Taylor}\ref{lem: y nnum Taylor a} again (with values $\nnum$, $\ndcerocuno$, $\Omegail$, $0$; note that (v) is satisfied because $\ndcerocuno \geq \nnum \geq \Omegail > \Omegail-1$) to the right-hand side of this equality shows that \eqref{eq: Y1 ndcero menor} is smaller than $\Omegail^{\nnum-1} \exp(-\Omegail) / (\nnum-1)!$. Therefore \eqref{eq: Y1 2} holds for $\nnum \leq \ndcero \leq \ndcerocuno$.

As for \eqref{eq: Y2 2}, it is seen that the lower condition in \eqref{eq: Y 2 max} is not met for any $\ndcero$ if $\Omegail=\nnum$, whereas if $\Omegail<\nnum$ there exist values of $\ndcero$ which satisfy each of the conditions. These two cases will be treated separately.

In the case $\Omegail<\nnum$, the proof proceeds along the same lines as that of \eqref{eq: Y1 2}. Let $\ndcerocdos = \Omegail/(\nnum-\Omegail)$. The fact that $\Omegail > \nnum-1$ implies that $\ndcerocdos > \nnum-1$. In addition, since $\ndcero \geq \nnum$, the upper condition in \eqref{eq: Y 2 max} can only be met if $\ndcerocdos \geq \nnum$. Thus it suffices to show first that $Y_{\nnum+1}(\ndcero+1,\Omegail/\ndcero)$ monotonically increases with $\ndcero > \ndcerocdos$ and tends to $\Omegail^\nnum \exp(-\Omegail) / \nnum!$ as $\ndcero \rightarrow \infty$; and second that, if $\ndcerocdos \geq \nnum$, $Y_{\nnum+1}(\ndcero+1,\nnum/(\ndcero+1))$ monotonically increases with $\ndcero \geq \nnum$ and is smaller than $\Omegail^\nnum \exp(-\Omegail) / \nnum!$ for $\ndcero = \ndcerocdos$. The first part directly stems from Lemma~\ref{lem: y nnum Taylor}\ref{lem: y nnum Taylor b} (with values $\nnum$, $\ndcero$, $\Omegail$, $-1$). As for the second, the increasing character of $Y_{\nnum+1}(\ndcero+1,\nnum/(\ndcero+1))$ with $\ndcero$ is also established by Lemma~\ref{lem: y nnum Taylor}\ref{lem: y nnum Taylor b} (with values $\nnum$, $\ndcero$, $\nnum$, $0$). The definition of $\ndcerocdos$ implies that $\nnum/(\ndcerocdos+1) = \Omegail/\ndcerocdos$, from which
\begin{equation}
\label{eq: Y2 ndcero menor}
Y_{\nnum+1}( \ndcerocdos+1, \nnum/(\ndcerocdos+1) ) = Y_{\nnum+1} ( \ndcerocdos+1, \Omegail/\ndcerocdos ),
\end{equation}
and applying Lemma~\ref{lem: y nnum Taylor}\ref{lem: y nnum Taylor b} (with values $\nnum$, $\ndcerocdos$, $\Omegail$, $-1$; (iv) and (v) hold because $\ndcerocdos = \Omegail/(\nnum-\Omegail) > \Omegail > \nnum-1$) to the right-hand side of \eqref{eq: Y2 ndcero menor} establishes that it is smaller than $\Omegail^\nnum \exp(-\Omegail) / \nnum!$.

In the case $\Omegail=\nnum$, the expression \eqref{eq: Y 2 max} reduces to its upper part, and \eqref{eq: Y2 2} follows from Lemma~\ref{lem: y nnum Taylor}\ref{lem: y nnum Taylor b} (with values $\nnum$, $\ndcero$, $\nnum$, $0$). This completes the proof.
\end{proof}

\begin{proof}[Proof of Proposition~\ref{prop: a b para minimax}]
For $L$ as in \eqref{eq: loss abs error gen}, equating $\diff(\lim_{p \rightarrow 0} \risk(p))/\diff\Omega$ given by \eqref{eq: diff bar risk diff Omega ll} to $0$, solving for $a/b$ and particularizing to $\Omega=\nnum-1$ yields \eqref{eq: a b minimax ll}.

As for $L$ given by \eqref{eq: loss upper ratio gen}, from \eqref{eq: cond Omega} it is seen that
\begin{equation}
\label{eq: a b minimax il parte2}
\frac a b = \frac{(\Omegail-\nnum+1)\Omegail}{(\nnum-1)(\nnum-\Omegail)}.
\end{equation}
Setting $\Omega^*=\Omegail$ in \eqref{eq: Omega opt il} and combining with \eqref{eq: a b minimax il parte2} yields \eqref{eq: a b minimax il}.
\end{proof}

\begin{lemma}
\label{lemma: Stirling}
For any $k \in \mathbb N$, the factorial $k!$ satisfies the following:
\begin{align}
k! & > \sqrt{2\pi} k^{k+1/2} \exp(-k), \\
\lim_{k \rightarrow \infty} \frac{k! \exp(k)}{k^{k+1/2}} & = \sqrt{2\pi}.
\end{align}
\end{lemma}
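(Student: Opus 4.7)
The plan is to prove both parts simultaneously by analyzing the auxiliary sequence $a_k = k! \exp(k) / k^{k+1/2}$. Part~(b) is simply the statement $\lim_{k\to\infty} a_k = \sqrt{2\pi}$, and part~(a) says $a_k > \sqrt{2\pi}$ for every $k \in \mathbb N$. Thus it suffices to establish two properties: that $a_k$ is strictly decreasing, and that its limit equals $\sqrt{2\pi}$. A strictly decreasing sequence always exceeds its limit at every finite index, so these properties together yield both conclusions.

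First, for the monotonicity, I would form the ratio
$$\frac{a_k}{a_{k+1}} = \frac{1}{\exp(1)}\left(1+\frac{1}{k}\right)^{k+1/2},$$
which reduces the task to verifying $(k+1/2)\log(1+1/k) > 1$ for all $k \geq 1$. Setting $x = 1/k \in (0,1]$, this is equivalent to $\log(1+x) > 2x/(2+x)$, which follows from a short calculus argument: the difference $f(x) = \log(1+x) - 2x/(2+x)$ satisfies $f(0)=0$ and $f'(x) = x^2/((1+x)(2+x)^2) > 0$ for $x>0$.

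Second, for the limit, monotonicity together with the obvious positivity of $a_k$ guarantees that $L = \lim_{k\to\infty} a_k$ exists and is nonnegative. To identify $L$, I would invoke Wallis's product in the form $\lim_{n\to\infty} 2^{2n}(n!)^2/((2n)!\sqrt{n}) = \sqrt{\pi}$. Substituting $k! = a_k k^{k+1/2}\exp(-k)$ for $k=n$ and $k=2n$ into the left-hand side, the powers of $n$ and the exponentials cancel, leaving $a_n^2/(a_{2n}\sqrt{2})$. Passing to the limit gives $L^2/(L\sqrt{2}) = \sqrt{\pi}$, hence $L = \sqrt{2\pi}$.

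The main obstacle is the identification of the limit with $\sqrt{2\pi}$: the monotonicity step is an elementary inequality in a single variable, but pinning down the constant requires an independent auxiliary identity such as Wallis's product (or an equivalent evaluation via a Gaussian integral or the Gamma reflection formula). Once this identification is in place, both conclusions of the lemma follow without further work, since a strictly decreasing sequence of positive reals converging to $\sqrt{2\pi}$ must lie strictly above $\sqrt{2\pi}$ at every index.
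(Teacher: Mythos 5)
Your route is genuinely different from the paper's: the paper disposes of this lemma in one line by citing the Stirling expansion with explicit remainder in Abramowitz and Stegun (their eq.~(6.1.38), essentially $k!=\sqrt{2\pi}\,k^{k+1/2}e^{-k}e^{\theta_k/(12k)}$ with $0<\theta_k<1$), from which both the strict inequality and the limit are immediate. You instead give the classical self-contained De Moivre--Wallis argument: monotone decrease of $a_k=k!\,e^k/k^{k+1/2}$ via the elementary inequality $\log(1+x)>2x/(2+x)$, plus Wallis's product to pin down the constant. Your ratio computation and the calculus verification of $f'(x)=x^2/((1+x)(2+x)^2)>0$ are both correct, as is the algebra reducing the Wallis expression to $a_n^2/(a_{2n}\sqrt2)$. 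What your approach buys is independence from an external asymptotic expansion; what it costs is length, since the paper only needs the two stated facts and a reliable reference suffices for its purposes.

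There is one genuine gap: the identification of the limit. Monotone decrease and positivity give only $L=\lim a_k\ge 0$, and the step ``passing to the limit gives $L^2/(L\sqrt2)=\sqrt\pi$'' divides by $L$, which is illegitimate unless you first rule out $L=0$. This is not a formality: the relation $a_n^2/a_{2n}\to\sqrt{2\pi}$ by itself does not exclude $L=0$ (the decreasing sequence $a_n=\sqrt{2\pi}\,r^n$ with $0<r<1$ satisfies $a_n^2/a_{2n}=\sqrt{2\pi}$ exactly yet tends to $0$). The standard fix is cheap with the tools you already have: sharpen your monotonicity estimate to an upper bound, e.g.\ $\log(1+x)\le x-x^2/2+x^3/3$ gives
\begin{equation*}
\log a_k-\log a_{k+1}=\left(k+\tfrac12\right)\log\left(1+\tfrac1k\right)-1\le\frac{1}{12k^2}+\frac{1}{6k^3},
\end{equation*}
whose sum over $k$ converges, so $\log a_k$ is bounded below and $L>0$. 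With that inserted, both conclusions follow as you describe, since a strictly decreasing sequence exceeds its (now correctly identified) limit at every index.
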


\begin{proof}
These expressions follow from \citet[eq.~(6.1.38)]{Abramowitz70}.
\end{proof}

\begin{lemma}
\label{lemma: gamma iguales lim}
For any sequence of numbers $\delta_k$ such that $0 \leq \delta_k \leq 1$, $\lim_{k \rightarrow \infty} \gamma(k,k+\delta_k) = 1/2$.
\end{lemma}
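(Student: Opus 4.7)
The plan is to exploit the probabilistic interpretation of the normalized incomplete gamma function. The function $u \mapsto \gamma(k,u)$ is the distribution function, evaluated at $u$, of a Gamma$(k,1)$ random variable, which may be represented as $S_k = X_1 + \cdots + X_k$ with $X_i$ independent unit exponentials (mean $1$, variance $1$). Hence
\begin{equation*}
\gamma(k, k+\delta_k) = P\!\left[\frac{S_k - k}{\sqrt{k}} \leq \frac{\delta_k}{\sqrt{k}}\right].
\end{equation*}

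By the classical central limit theorem, the distribution function $F_k$ of $(S_k-k)/\sqrt{k}$ converges pointwise to the standard normal cdf $\Phi$, and since $\Phi$ is continuous the convergence is in fact uniform (P\'olya's theorem). Since $0 \leq \delta_k/\sqrt{k} \leq 1/\sqrt{k}$, the evaluation point $\delta_k/\sqrt{k}$ tends to $0$, so
\begin{equation*}
\left| F_k(\delta_k/\sqrt{k}) - \tfrac{1}{2} \right| \leq \sup_t \left| F_k(t) - \Phi(t) \right| + \left| \Phi(\delta_k/\sqrt{k}) - \Phi(0) \right| \to 0,
\end{equation*}
using $\Phi(0) = 1/2$. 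This yields the stated limit.

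The main subtlety is that the evaluation point itself depends on $k$, which is precisely what forces the pointwise CLT to be upgraded to a uniform statement (via P\'olya's theorem, or via a Berry--Esseen quantitative bound). A fully self-contained analytic alternative, consistent with the style of the paper, is available: substitute $s = k + t\sqrt{k}$ in $\gamma(k, k+\delta_k) = \Gamma(k)^{-1}\int_0^{k+\delta_k} s^{k-1}e^{-s}\,\diff s$; invoke Lemma~\ref{lemma: Stirling} to show that the resulting prefactor $\sqrt{k}\,k^{k-1}e^{-k}/\Gamma(k)$ tends to $1/\sqrt{2\pi}$; note that the integrand $(1 + t/\sqrt{k})^{k-1} e^{-t\sqrt{k}}$ tends to $e^{-t^2/2}$ pointwise in $t$ and admits a uniform integrable dominating function on $(-\infty, 1]$; and conclude via dominated convergence and $\int_{-\infty}^{0} e^{-t^2/2}\,\diff t = \sqrt{2\pi}/2$. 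The delicate step in that analytic route is to produce the dominating function (splitting the integration region at some fixed $t_0 < 0$ and bounding $(1+t/\sqrt{k})^{k-1}e^{-t\sqrt{k}}$ by $e^{-t^2/4}$ for $|t|$ large enough and $k$ large enough is the natural device); given that Lemma~\ref{lemma: Stirling} has just been introduced, this is presumably the approach the authors intend.
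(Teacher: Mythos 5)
Your proof is correct, but it follows a genuinely different route from the paper's. You represent $\gamma(k,\cdot)$ as the distribution function of a sum of $k$ independent unit exponentials and invoke the central limit theorem, upgraded to uniform convergence of distribution functions via P\'olya's theorem so that the $k$-dependent evaluation point $\delta_k/\sqrt{k} \rightarrow 0$ can be handled; you correctly identify that this uniformity (or a Berry--Esseen bound) is the one subtlety. The paper instead avoids any limit theorem for sums: it cites \citet[lemma~1]{Adell05} for the single fact $\lim_{k \rightarrow \infty}\gamma(k,k) = 1/2$, uses the recurrence \eqref{eq: gamma vecinas} to write $\gamma(k,k+1)-\gamma(k+1,k+1) = (k+1)^k\exp(-k-1)/k!$, kills that correction term with Lemma~\ref{lemma: Stirling}, and concludes by squeezing $\gamma(k,k) \leq \gamma(k,k+\delta_k) \leq \gamma(k,k+1)$ using monotonicity of $\gamma(t,u)$ in $u$. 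Your guess that the authors intended the Laplace-type analytic argument with a dominating function is therefore not what happens; they outsource the hard limit to a citation. What each approach buys: the paper's argument is shorter and stays entirely within the identities already set up in Section~\ref{parte: proofs}, at the cost of depending on an external result on the Poisson median; yours is self-contained given standard probability theory, makes the mechanism ($1/2 = \Phi(0)$) transparent, and would generalize immediately to any bounded sequence $\delta_k$, or indeed any $\delta_k = o(\sqrt{k})$, whereas the paper's squeeze as written uses $0 \leq \delta_k \leq 1$.
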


\begin{proof}
According to \citet[lemma~1]{Adell05},
$\lim_{k \rightarrow \infty} \gamma(k,k) = 1/2$. From \eqref{eq: gamma vecinas},
\begin{equation}
\label{eq: gamma k,k+1}
\gamma(k,k+1) - \gamma(k+1,k+1) = {(k+1)^k \exp(-k-1)} / {k!}.
\end{equation}
As a result of Lemma~\ref{lemma: Stirling}, the right-hand side of \eqref{eq: gamma k,k+1} tends to $0$ as $k \rightarrow \infty$, and therefore $\lim_{k \rightarrow \infty} \gamma(k,k+1) = \lim_{k \rightarrow \infty} \gamma(k,k) = 1/2$. The fact that $\gamma(t,u)$ is monotone increasing in $u$ implies that $\gamma(k,k) \leq \gamma(k,k+\delta_k) \leq \gamma(k,k+1)$, and the desired result follows.
\end{proof}

\begin{lemma}
\label{lem: Omega opt linlin cota}
For $\nnum \geq 2$ and $a=b$, the solution $\Omega^*$ to \eqref{eq: Omega opt ll} lies in $(\nnum-4/3,\nnum-2+\log 2)$, and $\lim_{\nnum \rightarrow \infty} (\Omega^*-\nnum) = -4/3$.
\end{lemma}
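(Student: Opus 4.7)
The proof splits into establishing the uniform interval inclusion and then the asymptotic limit.

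For the inclusion, note first that \eqref{eq: Omega opt ll} with $a=b$ reduces to $\gamma(\nnum-1,\Omega^*)=1/2$, and since the map $\Omega\mapsto\gamma(\nnum-1,\Omega)$ is strictly increasing, the claim is equivalent to the pair of numerical inequalities
\begin{equation*}
\gamma(\nnum-1,\nnum-4/3)<\tfrac{1}{2}<\gamma(\nnum-1,\nnum-2+\log 2).
\end{equation*}
The plan is to prove each by induction on $\nnum$. The base case $\nnum=2$ uses $\gamma(1,u)=1-e^{-u}$ directly (yielding $\gamma(1,2/3)=1-e^{-2/3}<1/2$ and $\gamma(1,\log 2)=1/2$, with the right inequality becoming strict for $\nnum\geq 3$). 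The inductive step uses the recurrence \eqref{eq: gamma vecinas} to rewrite $\gamma(\nnum-1,\Omega)=\gamma(\nnum-2,\Omega)-\Omega^{\nnum-2}e^{-\Omega}/\Gamma(\nnum-1)$, and combines this with a horizontal shift of the second argument (from the endpoint value at index $\nnum-1$ to that at index $\nnum$) to reduce each inductive step to a sharp estimate of the boundary term via Stirling's formula (Lemma~\ref{lemma: Stirling}).

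For the limit, the interval inclusion already yields $\liminf_\nnum(\Omega^*_\nnum-\nnum)\geq -4/3$ and $\limsup_\nnum(\Omega^*_\nnum-\nnum)\leq -2+\log 2$, so the difficulty is entirely in closing the gap between these two constants. I would handle this by showing that, for every $\delta>0$, one has $\gamma(\nnum-1,\nnum-4/3+\delta)>1/2$ for all sufficiently large $\nnum$; monotonicity then gives $\Omega^*_\nnum<\nnum-4/3+\delta$ eventually, so $\limsup(\Omega^*_\nnum-\nnum)\leq -4/3+\delta$, and the limit follows. The required refined estimate on $\gamma(\nnum-1,\nnum-4/3+\delta)$ can be obtained by pairing Lemma~\ref{lemma: gamma iguales lim}, which supplies $\gamma(k,k+\eta)\to 1/2$ for $\eta\in[0,1]$, with a next-order correction extracted from \eqref{eq: gamma vecinas} and the sharp form of Stirling.

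The central technical obstacle is the inductive step for the interval bound: the boundary term $\Omega^{\nnum-2}e^{-\Omega}/\Gamma(\nnum-1)$ evaluated at $\Omega=\nnum-4/3$ must be estimated precisely enough to compensate the horizontal shift incurred between consecutive levels of the induction. The two endpoint constants $-4/3$ and $-2+\log 2$ are presumably chosen precisely because they are the values at which the Stirling-based estimate of this boundary term makes the induction close cleanly; indeed the lower endpoint $-4/3$ carries the genuine asymptotic information (it is the classical asymptotic offset of the median of $\mathrm{Gamma}(k,1)$ from its mean), while the upper endpoint $-2+\log 2$ plays the role of a convenient crude constant that suffices for the uniform bound.
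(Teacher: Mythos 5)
Your reduction of \eqref{eq: Omega opt ll} with $a=b$ to the pair of inequalities $\gamma(\nnum-1,\nnum-4/3)<\tfrac12<\gamma(\nnum-1,\nnum-2+\log 2)$ is correct, and the base case is right (it also correctly exposes that at $\nnum=2$ the solution is exactly $\log 2=\nnum-2+\log 2$, so the open interval in the statement is in fact attained at its right endpoint there). But the two steps that carry all the difficulty are only announced, not carried out, and as described they would not go through. The condition $\gamma(\nnum-1,\Omega^*)=1/2$ says precisely that $\Omega^*$ is the median of a $\mathrm{Gamma}(\nnum-1,1)$ distribution; the paper's entire proof is a citation of \citet{Alm03}, whose theorem (resolving the Chen--Rubin conjecture) states that the median-minus-mean sequence of $\mathrm{Gamma}(k,1)$ decreases strictly from $\log 2-1$ at $k=1$ to the limit $-1/3$, which yields both the interval and the limit at once after the shift $k=\nnum-1$. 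Your proposed induction via \eqref{eq: gamma vecinas} plus ``a sharp estimate of the boundary term via Stirling'' is in effect an attempt to reprove that theorem; the bookkeeping of the horizontal shift against the boundary term does not close by routine Stirling estimates at the sharp constant $1/3$ --- that is the actual content of the Chen--Rubin/Alm results and occupies a full paper.

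The second gap is in the limit argument. Your strategy (show $\gamma(\nnum-1,\nnum-4/3+\delta)>1/2$ for all large $\nnum$, for each fixed $\delta>0$) is sound in outline, but it requires the second-order, Ramanujan-type asymptotic $\gamma(k,k)=1/2-(1/3+o(1))\,k^{k}e^{-k}/k!$. The constant $1/3$ there comes from the skewness of the gamma density via a Laplace expansion of the integral around $s=k$; it cannot be ``extracted from \eqref{eq: gamma vecinas} and the sharp form of Stirling'': Lemma~\ref{lemma: gamma iguales lim} carries no rate, and \eqref{eq: gamma vecinas} only shifts the shape parameter, producing terms of size $1/\sqrt{2\pi k}$ without determining the coefficient $1/3$. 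Finally, your closing remarks invert the roles of the two constants: $-2+\log 2$ is not a crude convenience but the exact value at $\nnum=2$ (the median of $\mathrm{Exp}(1)$ is $\log 2$), and the classical asymptotic offset of the gamma median from its mean is $-1/3$, the $-4/3$ arising only because the shape parameter here is $\nnum-1$ rather than $\nnum$.
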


\begin{proof}
The result follows from \citet{Alm03}.
\end{proof}

\begin{lemma}
\label{lem: Omega opt invlin cota}
For $\nnum \geq 2$ and $a=b$, the solution $\Omega^*$ to \eqref{eq: Omega opt il} lies in $(\nnum-1, \nnum)$.
\end{lemma}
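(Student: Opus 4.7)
The plan is to exploit the strict monotonicity of the right-hand side of \eqref{eq: diff bar risk diff Omega il}, noted in Section~\ref{parte: disc}, which makes $\Omega^*$ the unique zero of that expression (specialized to $a=b$). Consequently, proving $\Omega^* \in (\nnum-1, \nnum)$ reduces to verifying that this expression is negative at $\Omega = \nnum-1$ and positive at $\Omega = \nnum$.

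For the lower endpoint, substituting $\Omega = \nnum-1$ and multiplying by the positive factor $(\nnum-1)^2$, the negativity condition becomes $(\nnum-1)\gamma(\nnum-1,\nnum-1) < \nnum(1 - \gamma(\nnum+1,\nnum-1))$. Two applications of \eqref{eq: gamma vecinas} at $u = \nnum-1$ give $\gamma(\nnum-1,\nnum-1) - \gamma(\nnum+1,\nnum-1) = q(2\nnum-1)/\nnum$, where $q = (\nnum-1)^{\nnum-1}\exp(-(\nnum-1))/(\nnum-1)!$. Substituting and collecting terms reduces the condition to $\gamma(\nnum,\nnum-1) < \nnum/(2\nnum-1)$. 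Since $\nnum/(2\nnum-1) > 1/2$, it suffices to prove the sharper bound $\gamma(\nnum,\nnum-1) < 1/2$.

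The upper endpoint is handled analogously. With $q' = \nnum^{\nnum-1}\exp(-\nnum)/(\nnum-1)!$, and using that $\nnum^\nnum/\nnum! = \nnum^{\nnum-1}/(\nnum-1)!$, \eqref{eq: gamma vecinas} yields $\gamma(\nnum-1,\nnum) - \gamma(\nnum+1,\nnum) = 2q'$ and $\gamma(\nnum-1,\nnum) = \gamma(\nnum,\nnum) + q'$. Substituting into the positivity condition $\nnum\gamma(\nnum-1,\nnum) > (\nnum-1)(1 - \gamma(\nnum+1,\nnum))$ and collecting terms rearranges it to $\gamma(\nnum,\nnum) > 1/2 - 1/(4\nnum-2) - q'/(2\nnum-1)$; since the right-hand side is strictly less than $1/2$, it suffices to show $\gamma(\nnum,\nnum) > 1/2$.

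Both sufficient inequalities $\gamma(\nnum,\nnum-1) < 1/2$ and $\gamma(\nnum,\nnum) > 1/2$ are instances of the classical fact that, for any integer $\lambda \geq 1$, the median of the Poisson distribution with mean $\lambda$ equals $\lambda$; in incomplete-gamma form this reads $\gamma(\lambda+1,\lambda) < 1/2 < \gamma(\lambda,\lambda)$. The strict form is available from \citet{Adell05}, the same reference invoked for Lemma~\ref{lemma: gamma iguales lim}. The main obstacle is the algebraic bookkeeping in the two reductions: the telescoping terms produced by \eqref{eq: gamma vecinas} must be combined with the prefactors $\nnum$ and $\nnum-1$ so that the auxiliary quantities $q$ and $q'$ either cancel exactly or can be absorbed into a positive slack of order $1/\nnum$, leaving an inequality directly comparable to $1/2$.
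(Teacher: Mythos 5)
Your proof is correct and is essentially the paper's own argument: both localize $\Omega^*$ by exploiting monotonicity of the stationarity condition and checking its sign at $\Omega=\nnum-1$ and $\Omega=\nnum$, and both reductions (after applying \eqref{eq: gamma vecinas}) bottom out in the same pair of median inequalities $\gamma(\nnum,\nnum-1)<1/2<\gamma(\nnum,\nnum)$. The only cosmetic differences are that the paper rearranges \eqref{eq: Omega opt il} into an equation $v_1(\Omega^*)=v_2(\Omega^*)$ with $v_1$ increasing and $v_2$ decreasing instead of working with the derivative \eqref{eq: diff bar risk diff Omega il} directly, and it obtains the median bounds via Lemma~\ref{lem: Omega opt linlin cota} (i.e.\ from \citet{Alm03}) rather than from \citet{Adell05}.
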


\begin{proof}
Using \eqref{eq: gamma vecinas} the condition \eqref{eq: Omega opt il} can be written, for $a=b$, as
\begin{equation}
\label{eq: Omega opt il 2}
\left( \frac{{\Omega^*}^2}{\nnum(\nnum-1)} + 1 \right) \gamma(\nnum,\Omega^*) = \frac{{\Omega^*}^\nnum \exp(-\Omega^*)}{\nnum!} \left( 1 - \frac{\Omega^*}{\nnum-1} \right) + 1.
\end{equation}
Let $v_1(\Omega^*)$ and $v_2(\Omega^*)$ respectively denote the left-hand and right-hand sides of \eqref{eq: Omega opt il 2}, considered as functions of $\Omega^*$. It is easily seen that $v_1$ is monotone increasing, whereas $v_2$ is monotone decreasing on the interval $(\nnum-1,\nnum)$. From Lemma~\ref{lem: Omega opt linlin cota} and the monotonicity of $\gamma(t,u)$ with respect to $u$ it follows that $\gamma(\nnum,\nnum-1) < 1/2$, which implies that $v_1(\nnum-1) < 1$. On the other hand, $v_2(\nnum-1) = 1$. Therefore the solution to \eqref{eq: Omega opt il 2}, or equivalently to \eqref{eq: Omega opt il}, satisfies $\Omega^* > \nnum-1$. By analogous arguments it is seen that $v_1(\nnum) > 1$ and $v_2(\nnum) < 1$. Therefore the solution satisfies $\Omega^* < \nnum$.
\end{proof}

\begin{lemma}
\label{lemma: conv unif}
For any $\delta_1, \delta_2 \in \mathbb R$, the sequence of functions $h_k(\delta) = \exp(\delta) ( 1 + \delta / (k-1) )^{-k+1}$, $k \in \mathbb N$, $ k \geq 2$, $\delta \in [\delta_1,\delta_2]$ converges uniformly to $1$ as $k \rightarrow \infty$.
\end{lemma}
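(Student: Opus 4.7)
The plan is to take logarithms and show that $\log h_k(\delta) \to 0$ uniformly on $[\delta_1,\delta_2]$; since $\exp$ is uniformly continuous on any bounded interval (and $\log h_k$ will be uniformly bounded), this will imply $h_k \to 1$ uniformly.

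Let $M = \max(|\delta_1|,|\delta_2|)$ and restrict attention to $k$ large enough that $M/(k-1) \leq 1/2$, which ensures that $1+\delta/(k-1) \geq 1/2 > 0$ for every $\delta \in [\delta_1,\delta_2]$, so that
\begin{equation*}
\log h_k(\delta) = \delta - (k-1)\log\left(1 + \frac{\delta}{k-1}\right)
\end{equation*}
is well defined. The key step is to control the Taylor remainder of $\log(1+x)$ around $x=0$ uniformly in $\delta$. By the Lagrange form of the remainder, for every $x \geq -1/2$ there exists $\xi$ between $0$ and $x$ with $\log(1+x) - x = -x^2/(2(1+\xi)^2)$, and since $1+\xi \geq 1/2$ this gives $|\log(1+x) - x| \leq 2x^2$.

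Applying this with $x = \delta/(k-1)$ yields
\begin{equation*}
|\log h_k(\delta)| = (k-1)\left|\log\left(1+\frac{\delta}{k-1}\right) - \frac{\delta}{k-1}\right| \leq (k-1) \cdot \frac{2\delta^2}{(k-1)^2} \leq \frac{2M^2}{k-1},
\end{equation*}
and the right-hand side is independent of $\delta$ and tends to $0$ as $k \to \infty$. Hence $\log h_k \to 0$ uniformly on $[\delta_1,\delta_2]$, and exponentiating gives $h_k \to 1$ uniformly.

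I do not foresee a real obstacle here: the only thing to be careful about is ensuring that the Taylor bound is uniform in $\delta$, which is why I restrict to $k$ large enough that $\delta/(k-1)$ stays bounded away from $-1$; after that, the argument is a routine remainder estimate.
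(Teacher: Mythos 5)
Your proof is correct, but it takes a genuinely different route from the paper's. The paper proves (for $k$ beyond a threshold depending on $\delta_1$) that $h_k(\delta)$ is monotone decreasing in $k$ for each fixed $\delta$ — by differentiating $\delta-(x-1)\log(1+\delta/(x-1))$ in a continuous variable $x$ and using $\log(1+t)>t/(1+t)$ — and then invokes Dini's theorem on the compact interval $[\delta_1,\delta_2]$ to upgrade pointwise convergence to uniform convergence. You instead bound the quantity directly: writing $\log h_k(\delta)=(k-1)\bigl(x-\log(1+x)\bigr)$ with $x=\delta/(k-1)$ and using the Lagrange remainder $\log(1+x)-x=-x^2/(2(1+\xi)^2)$ with $1+\xi\geq 1/2$, you obtain the uniform estimate $|\log h_k(\delta)|\leq 2M^2/(k-1)$, and exponentiation finishes the argument. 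Your version is more elementary (no appeal to Dini) and yields an explicit $O(1/k)$ rate of convergence, which the paper's argument does not provide; the paper's version avoids any remainder estimate at the cost of a monotonicity computation and a compactness theorem. Both proofs are complete and correct; your restriction to $k$ with $M/(k-1)\leq 1/2$ properly handles the uniformity of the Taylor bound, which is the only delicate point.
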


\begin{proof}
Let $k_0 = \max\{-\delta_1,0\}+2$. As $1+\delta/(k-1) > 0$ for $\delta \geq \delta_1$, $k \geq k_0$, it is possible to take logarithms in the definition of $h_k(\delta)$ for $k \geq k_0$, which gives $\log h_k(\delta) = \delta - (k-1) \log( 1 + \delta / (k-1) )$. Replacing $k$ by a continuous variable $x>1$ and using the inequality $\log(1+t) > t/(1+t)$, it is seen that
\begin{equation}
\label{eq: der log h k}
\frac{\partial}{\partial x} \left[ \delta - (x-1) \log \left( 1 + \frac{\delta}{x-1} \right) \right] = -\log \left( 1 + \frac{\delta}{x-1} \right) + \frac{\delta}{x+\delta-1} < 0.
\end{equation}
This implies that $h_{k+1}(\delta) < h_k(\delta)$ for $k \geq k_0$. In addition, $h_k(\delta)$, $\delta \in [\delta_1, \delta_2]$ is a continuous function and converges pointwise to $1$ as $k \rightarrow \infty$. Thus Dini's theorem \citep[p.~248]{Apostol74} can be applied, which ensures that the convergence is uniform.
\end{proof}

\begin{proof}[Proof of Proposition~\ref{prop: a b 1 asint minimax}]
For $L$ as in \eqref{eq: loss abs error gen}, particularizing \eqref{eq: bar risk ll} to $a=b$, $\Omega=\nnum-1$ and using \eqref{eq: gamma vecinas},
\begin{equation}
\label{eq: bar risk ll sym}
\frac{\bar\risk}{a} = 2(\gamma(\nnum-1,\nnum-1) - \gamma(\nnum,\nnum-1)) = \frac{2(\nnum-1)^{\nnum-2} \exp(-\nnum+1)}{(\nnum-2)!}.
\end{equation}
In the following, the value $\Omega^*$ determined by \eqref{eq: Omega opt ll} for a given $\nnum$ will be denoted as $\Omega^*_\nnum$. Particularizing \eqref{eq: risk opt ll} to $a=b$,
\begin{equation}
\label{eq: bar risk ll sym opt}
\frac{\risk^*}{a}
= \frac{2 {\Omega^*_\nnum}^{\nnum-1} \exp(-\Omega^*_\nnum) }{(\nnum-1)!}.
\end{equation}
From \eqref{eq: bar risk ll sym} and \eqref{eq: bar risk ll sym opt}, with $h_k(\delta)$ as defined in Lemma~\ref{lemma: conv unif}, it follows that
\begin{equation}
\label{eq: rel risk h}
\frac{\bar\risk}{\risk^*} = \left( \frac{\nnum-1}{\Omega^*_\nnum} \right)^{\nnum-1} \exp(\Omega^*_\nnum-\nnum+1) = h_\nnum(\delta^*_\nnum)
\end{equation}
with $\delta^*_\nnum = \Omega^*_\nnum-\nnum+1$. Lemma~\ref{lem: Omega opt linlin cota} establishes that $\delta^*_\nnum \in [-1/3, -1+\log 2]$ and $\lim_{\nnum \rightarrow \infty} \delta^*_\nnum =  -1/3$. On the other hand, by Lemma~\ref{lemma: conv unif}, $h_k \rightarrow 1$  uniformly on $[-1/3, -1+\log 2]$ as $k \rightarrow \infty$. Therefore, according to \citet[theorem 9.16]{Apostol74},
$\lim_{k,l \rightarrow \infty} h_k(\delta^*_l)$ exists and equals $1$. Thus, in particular, $\lim_{\nnum \rightarrow \infty} h_\nnum(\delta^*_\nnum) = 1$, which combined with \eqref{eq: rel risk h} establishes that $\lim_{\nnum \rightarrow \infty} \bar\risk/\risk^* = 1$.

For $L$ as in \eqref{eq: loss upper ratio gen}, and with $\Omega^*$ given by \eqref{eq: Omega opt il}, let $\Omega^*_\nnum$ and $\delta^*_\nnum$ be defined as before. In addition, let $\Omegail_\nnum$ denote the value of $\Omegail$ corresponding to a given $\nnum$, and $\deltail_\nnum = \Omegail_\nnum-\nnum+1$. Particularizing \eqref{eq: bar risk il} to $a=b$, $\Omega=\Omegail_\nnum$ and using \eqref{eq: gamma vecinas} gives
\begin{equation}
\label{eq: bar risk il sym}
\begin{split}
\frac{\bar\risk}{a} & = \frac{\nnum}{\Omegail_\nnum} ( \gamma (\nnum-1,\Omegail_\nnum) - \gamma (\nnum+1,\Omegail_\nnum) ) + \left( \frac{\Omegail_\nnum}{\nnum-1} - \frac{\nnum}{\Omegail_\nnum} \right) \gamma(\nnum-1,\Omegail_\nnum) + \frac{\nnum}{\Omegail_\nnum}-1 \\
& = \frac{\Omegail_\nnum^{\nnum-1} \exp(-\Omegail_\nnum)}{(\nnum-1)!} \left( 1 + \frac{\nnum}{\Omegail_\nnum} \right)
+ \left( \frac{\Omegail_\nnum}{\nnum-1} - \frac{\nnum}{\Omegail_\nnum} \right) \gamma(\nnum-1,\Omegail_\nnum) + \frac{\nnum}{\Omegail_\nnum}-1.
\end{split}
\end{equation}
Thus $\bar\risk$ can be written as $a(\riskparteil_0 + \riskparteil_1 + \riskparteil_2)$ with
\begin{align}
\label{eq: riskparte0}
\riskparteil_0 & = \frac{(\nnum-1+\deltail_\nnum)^{\nnum-2} \exp(-\nnum+1-\deltail_\nnum)}{(\nnum-2)!} \left( 2 + \frac{1+\deltail_\nnum}{\nnum-1} \right), \\
\riskparteil_1 &
= \frac{ \frac{\deltail_\nnum^2}{\nnum-1} + 2\deltail_\nnum - 1 }{\nnum-1+\deltail_\nnum} \gamma(\nnum-1,\nnum-1+\deltail_\nnum), \\
\label{eq: riskparte2}
\riskparteil_2 & = \frac{1-\deltail_\nnum}{\nnum-1+\deltail_\nnum}.
\end{align}
The quotient $\riskparteil_2/\riskparteil_0$ is computed as
\begin{equation}
\label{eq: riskparte2 div 0}
\frac{\riskparteil_2}{\riskparteil_0}
= \frac{(1-\deltail_\nnum) \exp(\deltail_\nnum)}{ \left( 1 + \frac{\deltail_\nnum}{\nnum-1} \right)^{\nnum-1} \left( 2 + \frac{1+\deltail_\nnum}{\nnum-1} \right) } \cdot
\frac{(\nnum-1)! \exp(\nnum-1)}{(\nnum-1)^{\nnum-1/2}} \cdot
\frac{1}{\sqrt{\nnum-1}}.
\end{equation}
Proposition~\ref{prop: Omega invlin cota} implies that $\deltail_\nnum \in (0,1)$. Taking into account that $\nnum \geq 2$, it is seen that the first factor in \eqref{eq: riskparte2 div 0} lies in a bounded interval for all $\nnum$, whereas, by the equality in Lemma~\ref{lemma: Stirling}, the second factor tends to $\sqrt{2\pi}$ as $\nnum \rightarrow \infty$. As a result, $\lim_{\nnum \rightarrow \infty} \riskparteil_2/\riskparteil_0 = 0$. Similarly, $\riskparteil_1/\riskparteil_0$ is expressed as
\begin{equation}
\label{eq: riskparte1 div 0}
\frac{\riskparteil_1}{\riskparteil_0}
= \frac{ \left(  \frac{\deltail_\nnum^2}{\nnum-1} + 2\deltail_\nnum - 1 \right) \exp(\deltail_\nnum) }
{ \left( 1+\frac{\deltail_\nnum}{\nnum-1}\right)^{\nnum-1} \left( 2+\frac{1+\deltail_\nnum}{\nnum-1}\right) } \cdot
\frac{(\nnum-1)! \exp(\nnum-1)}{(\nnum-1)^{\nnum-1/2}} \cdot
\gamma(\nnum-1,\nnum-1+\deltail_\nnum) \cdot
\frac{1}{\sqrt{\nnum-1}}.
\end{equation}
As before, the first factor in the right-hand side of \eqref{eq: riskparte1 div 0} is bounded, and the second tends to $\sqrt{2\pi}$. The third factor tends to $1/2$ by Lemma~\ref{lemma: gamma iguales lim}. Thus $\lim_{\nnum \rightarrow \infty} \riskparteil_1/\riskparteil_0 = 0$.

The quotient $\risk^*/a$ is given as in \eqref{eq: bar risk il sym} with $\Omegail_\nnum$ replaced by $\Omega^*_\nnum$; and $\risk^* = a(\riskparte_0^*+\riskparte_1^*+\riskparte_2^*)$, where $\riskparte_0^*$, $\riskparte_1^*$ and $\riskparte_2^*$ are obtained from \eqref{eq: riskparte0}--\eqref{eq: riskparte2}
with $\deltail_\nnum$ replaced by $\delta^*_\nnum$. Lemma~\ref{lem: Omega opt invlin cota} implies that $\delta^*_\nnum \in (0,1)$, and arguments analogous to those in the preceding paragraph show that $\riskparte_1^*/\riskparte_0^*$ and $\riskparte_2^*/\riskparte_0^*$ tend to $0$ as $\nnum \rightarrow \infty$. As a result, $\lim_{\nnum \rightarrow \infty} \bar\risk/\risk^*$ can be computed as
\begin{equation}
\label{eq: riskparte0 div}
\begin{split}
\lim_{\nnum \rightarrow \infty} \frac{\bar\risk}{\risk^*}
& = \lim_{\nnum \rightarrow \infty} \frac{\riskparteil_0}{\riskparte_0^*}
= \lim_{\nnum \rightarrow \infty}
\frac{\left(1+\frac{\deltail_\nnum}{\nnum-1} \right)^{\nnum-2} \exp(-\deltail_\nnum)}{\left(1+\frac{\delta^*_\nnum}{\nnum-1}\right)^{\nnum-2} \exp(-\delta^*_\nnum)}
\cdot
\frac{2+\frac{1+\deltail_\nnum}{\nnum-1}}{2+\frac{1+\delta^*_\nnum}{\nnum-1}}.
\end{split}
\end{equation}
Since $\deltail_\nnum, \delta^*_\nnum \in (0,1)$ for all $\nnum$, it is clear that the second factor in the rightmost part of \eqref{eq: riskparte0 div} tends to $1$ as $\nnum \rightarrow \infty$. By Lemma~\ref{lemma: conv unif}, $(1+\delta/(\nnum-1))^{\nnum-1} \exp(-\delta) \rightarrow 1$ uniformly for $\delta \in (0,1)$. This implies that the numerator and denominator of the first factor in \eqref{eq: riskparte0 div} tend to $1$ as $\nnum \rightarrow \infty$ (note that $\deltail_\nnum$ and $\delta^*_\nnum$ are not required to converge). Consequently $\lim_{\nnum \rightarrow \infty} \bar\risk/\risk^* = 1$.
\end{proof}


\begin{thebibliography}{25}
\expandafter\ifx\csname natexlab\endcsname\relax\def\natexlab#1{#1}\fi

\bibitem[{Abramowitz and Stegun(1970)}]{Abramowitz70}
Abramowitz M, Stegun IA (eds)  (1970) Handbook of Mathematical Functions, ninth
  edn. Dover

\bibitem[{Adell and Jodr\'a(2005)}]{Adell05}
Adell JA, Jodr\'a P (2005) The median of the {Poisson} distribution. Metrika
  61:337--346

\bibitem[{Akdeniz(2004)}]{Akdeniz04}
Akdeniz F (2004) New biased estimators under the linex loss function.
  Statistical Papers 45:175--190

\bibitem[{Alm(2003)}]{Alm03}
Alm SE (2003) Monotonicity of the difference between median and mean of gamma
  distributions and of a related {Ramanujan} sequence. Bernoulli 9(2):351--371

\bibitem[{Alvo(1977)}]{Alvo77}
Alvo M (1977) Bayesian sequential estimation. Annals of Statistics
  5(5):955--968

\bibitem[{Apostol(1974)}]{Apostol74}
Apostol TM (1974) Mathematical Analysis, 2nd edn. Addison-Wesley

\bibitem[{Baran and Magiera(2010)}]{Baran10}
Baran J, Magiera R (2010) Optimal sequential estimation procedures of a
  function of a probability of success under {LINEX} loss. Statistical Papers
  51(3):511--529

\bibitem[{Berger(1985)}]{Berger85}
Berger JO (1985) Statistical Decision Theory and {Bayesian} Analysis, 2nd edn.
  Springer-Verlag

\bibitem[{Best(1974)}]{Best74}
Best DJ (1974) The variance of the inverse binomial estimator. Biometrika
  61(2):385--386

\bibitem[{Cabilio(1977)}]{Cabilio77}
Cabilio P (1977) Sequential estimation in {Bernoulli} trials. Annals of
  Statistics 5(2):342--356

\bibitem[{Cabilio and Robbins(1975)}]{Cabilio75}
Cabilio P, Robbins H (1975) Sequential estimation of $p$ with squared relative
  error loss. Proceedings of the National Academy of Sciences of the United
  States of America 72(1):191--193

\bibitem[{Christoffersen and Diebold(1997)}]{Christoffersen97}
Christoffersen PF, Diebold FX (1997) Optimal prediction under asymmetric loss.
  Econometric Theory 13:808--817

\bibitem[{DeGroot(1959)}]{DeGroot59}
DeGroot MH (1959) Unbiased sequential estimation for binomial populations.
  Annals of Mathematical Statistics 30(1):80--101

\bibitem[{Girshick et~al(1946)Girshick, Mosteller, and Savage}]{Girshick46}
Girshick MA, Mosteller F, Savage LJ (1946) Unbiased estimates for certain
  binomial sampling problems with applications. Annals of Mathematical
  Statistics 17(1):13--23

\bibitem[{Granger(1969)}]{Granger69}
Granger CWJ (1969) Prediction with a generalized cost of error function.
  Operational Research Quarterly 20(2):199--207

\bibitem[{Haldane(1945)}]{Haldane45}
Haldane JBS (1945) On a method of estimating frequencies. Biometrika
  33(3):222--225

\bibitem[{Hubert and Pyke(2000)}]{Hubert00}
Hubert SL, Pyke R (2000) Sequential estimation of functions of $p$ for
  {Bernoulli} trials. In: Game Theory, Optimal Stopping, Probability and
  Statistics, Institute of Mathematical Statistics, pp 263--294

\bibitem[{Lehmann and Casella(1998)}]{Lehmann98}
Lehmann EL, Casella G (1998) Theory of Point Estimation, 2nd edn. Springer

\bibitem[{Mendo(2009)}]{Mendo09a}
Mendo L (2009) Estimation of a probability with guaranteed normalized mean
  absolute error. {IEEE} Communications Letters 13(11):817--819

\bibitem[{Mendo(2012)}]{Mendo10_env}
Mendo L (2012) Asymptotically optimum estimation of a probability in inverse
  binomial sampling. Journal of Statistical Planning and Inference 142(10):2862--2870.

\bibitem[{Mendo and Hernando(2006)}]{Mendo06}
Mendo L, Hernando JM (2006) A simple sequential stopping rule for {Monte Carlo}
  simulation. {IEEE} Transactions on Communications 54(2):231--241

\bibitem[{Mendo and Hernando(2008)}]{Mendo08a}
Mendo L, Hernando JM (2008) Improved sequential stopping rule for {Monte Carlo}
  simulation. {IEEE} Transactions on Communications 56(11):1761--1764

\bibitem[{Mendo and Hernando(2010)}]{Mendo10}
Mendo L, Hernando JM (2010) Estimation of a probability with optimum guaranteed
  confidence in inverse binomial sampling. Bernoulli 16(2):493--513

\bibitem[{Mikulski and Smith(1976)}]{Mikulski76}
Mikulski PW, Smith PJ (1976) A variance bound for unbiased estimation in
  inverse sampling. Biometrika 63(1):216--217

\bibitem[{Sathe(1977)}]{Sathe77}
Sathe YS (1977) Sharper variance bounds for unbiased estimation in inverse
  sampling. Biometrika 64(2):425--426

\end{thebibliography}
\end{document}